\newtheorem{thm}{Theorem}[section]
\newtheorem{cor}[thm]{Corollary}
\newtheorem{lem}[thm]{Lemma}
\newtheorem{prop}[thm]{Proposition}
\newtheorem{exa}[thm]{Example}
\theoremstyle{definition}
\numberwithin{equation}{section}
\begin{document}
\vbox{\vskip 1cm}
\title{When an $\mathscr{S}$-closed submodule is a direct summand } \maketitle

\begin{center}
Yongduo Wang\ \ \ \ \ Dejun Wu\\
\vskip 4mm Department of Applied Mathematics, Lanzhou University of Technology\\
Lanzhou 730050, P. R. China\\
E-mail: ydwang@lut.cn\\
 E-mail: wudj2007@gmail.com\\

\end{center}

\bigskip

\centerline{\bf Abstract} It is well known that a direct sum of
CLS-modules is not, in general, a CLS-module. It is proved that if
$M=M_1\oplus M_2$, where $M_1$ and $M_2$ are CLS-modules such that
$M_1$ and $M_2$ are relatively ojective (or $M_1$ is
$M_2$-ejective), then $M$ is a CLS-module and some known results are
generalized. Tercan [8] proved that if a module $M=M_{1}\oplus
M_{2}$ where $M_{1}$ and $M_{2}$ are CS-modules such that $M_{1}$ is
$M_{2}$-injective, then $M$ is a CS-module if and only if $Z_{2}(M)$
is a CS-module. Here we will show that Tercan's claim is not true.

\bigskip
\noindent

\bigskip

\baselineskip=20pt

\section{Introduction}

CS-modules play important roles in rings and categories of modules
and their generalizations have been studied extensively  by many
authors recently. In [3], Goodearl defined an
\emph{$\mathscr{S}$-closed} submodule of a module $M$ is a submodule
$N$ for which $M/N$ is nonsingular. Note that $\mathscr{S}$-closed
submodules are always closed. In general, closed submodules need not
be $\mathscr{S}$-closed. For example, $0$ is a closed submodule of
any module $M$, but $0$ is $\mathscr{S}$-closed in $M$ only if $M$
is nonsingular. As a proper generalization of CS-modules, Tercan
introduced the concept of CLS-modules. Following [8], a module $M$
is called a \emph{CLS-module} if every $\mathscr{S}$-closed
submodule of $M$ is a direct summand of $M$. In this paper, we
continue the study of CLS-modules. Some preliminary results on
CLS-modules are given in Section 1. In Section 2, direct sums of
CLS-modules are studied. It is shown that if $M=M_1\oplus M_2$,
where $M_1$ and $M_2$ are CLS-modules such that $M_1$ and $M_2$ are
relatively ojective, then $M$ is a CLS-module and some known results
are generalized. Tercan [8] proved that if a module $M=M_{1}\oplus
M_{2}$ where $M_{1}$ and $M_{2}$ are CS-modules such that $M_{1}$ is
$M_{2}$-injective, then $M$ is a CS-module if and only if $Z_{2}(M)$
is a CS-module. It is shown that Tercan's claim is not true in this
section.

  Throughout this paper, $R$ is an associative ring with identity and
all modules are unital right $R$-modules. We use $N\leq M$ to
indicate that $N$ is a submodule of $M$. Let $M$ be a module and
$S\leq M$. $S$ is \emph{essential} in $M$ (denoted by $S\leq_e M$)
if for any $T\leq M, S\cap T=0$ implies $T=0$.  A module $M$ is
 \emph{CS} if for any  submodule $N$ of
$M$, there exists a direct summand $K$ of $M$ such that $N\leq_e K$.
 A submodule $K$ of $M$ closed in $M$ if $K$ has no proper essential
 extension in $M$, i.e., whenever $L$ is a submodule of $M$ such that
  $K$ is essential in $L$ then $K=L$. It is well known that $M$ is
CS if and only every closed submodule is a direct summand of $M$.
$Z(M)$($Z_2(M)$) denotes the (second) singular submodule of $M$. For
standard definitions we refer to [3].

\section{Preliminary results}

\begin{lem}([8, Lemma 7])
Any direct summand of a CLS-module is a CLS-module.
\end{lem}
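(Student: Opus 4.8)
The plan is to pull the question back to the CLS hypothesis on the ambient module by choosing a suitable $\mathscr{S}$-closed submodule upstairs. So suppose $M$ is a CLS-module and $M=N\oplus K$; the goal is to show that $N$ is a CLS-module. Fix an $\mathscr{S}$-closed submodule $L$ of $N$, that is, $L\leq N$ with $N/L$ nonsingular, and aim to exhibit $L$ as a direct summand of $N$.

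The key observation is that, although $L$ itself may fail to be $\mathscr{S}$-closed in $M$ (note $M/L\cong N/L\oplus K$ and $K$ can be singular), the submodule $L\oplus K$ \emph{is} $\mathscr{S}$-closed in $M$. Indeed, since $L\leq N$ the modular law gives $N\cap(L\oplus K)=L\oplus(N\cap K)=L$, so restricting the canonical epimorphism $M\to M/(L\oplus K)$ to $N$ yields an epimorphism $N\to M/(L\oplus K)$ with kernel $N\cap(L\oplus K)=L$; hence $M/(L\oplus K)\cong N/L$ is nonsingular. Since $M$ is a CLS-module, $L\oplus K$ is therefore a direct summand of $M$, say $M=(L\oplus K)\oplus T$ for some $T\leq M$.

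Now I descend this splitting to $N$. Using $L\leq N$ and the modular law once more, $N=N\cap M=N\cap\bigl(L\oplus(K\oplus T)\bigr)=L\oplus\bigl(N\cap(K\oplus T)\bigr)$, the sum being direct because $L\cap(K\oplus T)=0$ (any such element lies in $(L\oplus K)\cap T=0$ after subtracting its $K$-component, hence in $L\cap K=0$). Thus $L$ is a direct summand of $N$, and since $L$ was an arbitrary $\mathscr{S}$-closed submodule of $N$, the module $N$ is a CLS-module. The only genuine decision in the argument is to work with $L\oplus K$ rather than with $L$; once that choice is made, the two applications of the modular law and the isomorphism $M/(L\oplus K)\cong N/L$ are routine, so I do not expect a serious obstacle — this is the CLS-analogue of the standard fact that direct summands of CS-modules are CS.
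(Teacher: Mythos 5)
Your proof is correct: the passage to $L\oplus K$ is exactly the right move (since $L$ itself need not be $\mathscr{S}$-closed in $M$ when $K$ is singular), the isomorphism $M/(L\oplus K)\cong N/L$ is justified properly, and both applications of the modular law are valid. The paper itself gives no argument for this lemma --- it is stated with a bare citation to [8, Lemma 7] --- so there is no in-paper proof to compare against; your write-up is the standard self-contained argument one would expect to find in that reference, and it would serve as a complete substitute for the citation.
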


\begin{prop}
A module $M$ is a CLS-module if and only if for each
$\mathscr{S}$-closed submodule $K$ of $M$, there exists a complement
$L$ of $K$ in $M$ such that every homomorphism $f: K\oplus
L\rightarrow M$ can be extended to a homomorphism $g: M\rightarrow
M$.
\end{prop}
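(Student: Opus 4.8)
The plan is to prove the two directions separately; the forward implication is nearly immediate and the content lies in the converse. For the forward direction, suppose $M$ is a CLS-module and $K$ is an $\mathscr{S}$-closed submodule, so that $M=K\oplus L$ for some $L$. Such an $L$ is automatically a complement of $K$: if $L\subseteq N\leq M$ with $K\cap N=0$, then writing $n\in N$ as $n=k+l$ with $k\in K$, $l\in L$ gives $k=n-l\in N\cap K=0$, so $N=L$. Since $K\oplus L=M$, any homomorphism $f\colon K\oplus L\to M$ is already defined on all of $M$, so we may take $g=f$.

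For the converse, assume the stated property and let $K$ be an $\mathscr{S}$-closed submodule; the goal is to realise $K$ as a direct summand of $M$. Take the complement $L$ of $K$ provided by the hypothesis, and let $f\colon K\oplus L\to M$ be the projection onto $K$ along $L$ followed by the inclusion $K\hookrightarrow M$, i.e.\ $f(k+l)=k$. Extend $f$ to an endomorphism $g$ of $M$. The key step is to show $g(M)\subseteq K$. Since $L$ is a complement of $K$, the submodule $K\oplus L$ is essential in $M$, hence $M/(K\oplus L)$ is singular; on the other hand $g(K\oplus L)=\operatorname{Im}f=K$, so composing $g$ with the quotient map $M\to M/K$ annihilates $K\oplus L$ and induces $\bar g\colon M/(K\oplus L)\to M/K$ with image $(g(M)+K)/K$. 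That image is a homomorphic image of a singular module, hence singular, and at the same time a submodule of the nonsingular module $M/K$ (nonsingular precisely because $K$ is $\mathscr{S}$-closed); being both singular and nonsingular, it is zero, so $g(M)\subseteq K$. Since $g$ restricts to the identity on $K$ and $g(M)\subseteq K$, we get $g^{2}=g$, so $g$ is an idempotent endomorphism of $M$ with image $K$; therefore $M=K\oplus\ker g$ and $K$ is a direct summand. As $K$ was arbitrary among $\mathscr{S}$-closed submodules, $M$ is a CLS-module.

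I expect the step requiring the most care to be $g(M)\subseteq K$, which rests on a handful of standard facts about (non)singular modules that should be invoked explicitly: a module modulo an essential submodule is singular, singularity passes to homomorphic images, nonsingularity passes to submodules, and a module that is simultaneously singular and nonsingular is zero. An alternative route that sidesteps this bookkeeping is to observe that $L\subseteq\ker g$ and $K\cap\ker g=0$, whence maximality of the complement $L$ forces $\ker g=L$; then $g$ induces an isomorphism $M/L\cong g(M)$ carrying the essential submodule $(K\oplus L)/L$ onto $K$, so $K$ is essential in $g(M)$, and since an $\mathscr{S}$-closed submodule is closed it has no proper essential extension, giving $g(M)=K$. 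Everything else — identifying $L$ as a complement, forming $\bar g$, and splitting off the idempotent $g$ — is routine.
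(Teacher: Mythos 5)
Your proof is correct. Note, though, that the paper does not actually argue this proposition: it disposes of it in one line by citing [7, Lemma~2] of Smith--Tercan, which states that a \emph{closed} submodule $K$ of $M$ is a direct summand if and only if some complement $L$ of $K$ has the stated lifting property; since every $\mathscr{S}$-closed submodule is closed, the proposition is immediate. What you have done, in effect, is reconstruct a self-contained proof of that lemma in the case at hand, and your two routes correspond to the two available hypotheses: your ``alternative route'' (maximality of $L$ forces $\ker g=L$, then $K\leq_e g(M)$ and closedness of $K$ gives $g(M)=K$) is essentially the argument behind [7, Lemma~2] and uses only that $K$ is closed, while your main route trades closedness for the stronger $\mathscr{S}$-closedness, deducing $g(M)\subseteq K$ from the fact that $(g(M)+K)/K$ is a singular submodule of the nonsingular module $M/K$. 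Both are sound; the first is the more general (and is what the cited lemma actually proves), the second is slightly more economical given that $M/K$ is nonsingular by hypothesis. All the supporting facts you invoke --- that a direct-sum complement is a complement in the maximality sense, that $K\oplus L\leq_e M$ when $L$ is a complement of $K$, that $M$ modulo an essential submodule is singular, and that a module which is both singular and nonsingular is zero --- are standard and correctly applied.
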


\begin{proof}
This is a direct consequence of [7, Lemma 2].
\end{proof}

Following [1], a module $M$ is \emph{$\mathscr{G}$-extending} if for
each submodule $X$ of $M$ there exists a direct summand $D$ of $M$
such that $X\cap D\leq_eX$ and $X\cap D\leq_eD$.

\begin{prop}
Let $M$ be a $\mathscr{G}$-extending module. Then $M$ is a
CLS-module.
\end{prop}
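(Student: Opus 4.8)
The plan is to show directly that every $\mathscr{S}$-closed submodule of a $\mathscr{G}$-extending module $M$ is a direct summand. So let $K$ be an $\mathscr{S}$-closed submodule of $M$; by definition $M/K$ is nonsingular. Apply the $\mathscr{G}$-extending hypothesis to $X = K$: there is a direct summand $D$ of $M$ with $K\cap D\leq_e K$ and $K\cap D\leq_e D$. Write $M = D\oplus D'$ for some complement $D'$. The goal is to prove $K = D$, which immediately gives that $K$ is a direct summand.

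First I would use $K\cap D\leq_e D$ together with the fact that $\mathscr{S}$-closed submodules are closed. Since $K$ is closed in $M$ and $K\cap D\leq_e D\leq M$, while also $K\cap D\leq K$, one checks $K$ is essential in $K+D$: indeed $K\cap D\leq_e D$ forces $K\leq_e K+D$ (a standard essentiality computation, using that any nonzero submodule of $K+D$ meets $D$ hence meets $K\cap D\subseteq K$ — more carefully, one passes through $(K+D)/K\cong D/(K\cap D)$, but that quotient may not be zero, so the cleaner route is: $K\cap D\le_e D$ and $K\cap D\le K$ imply $K\le_e K+D$ by the general lemma that if $A\le_e B$ inside $C$ and $A\le A'\le C$ then $A'\le_e A'+B$). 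Then closedness of $K$ in $M$ yields $K = K+D$, i.e. $D\subseteq K$.

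Next I would get the reverse inclusion using the nonsingularity of $M/K$ and the essential condition $K\cap D\leq_e K$. Since $D\subseteq K$, consider the submodule $K/D$ of $M/D\cong D'$. From $K\cap D\leq_e K$ we deduce... actually the efficient approach: $D$ is a direct summand of $M$ and $D\subseteq K$, so $K = D\oplus(K\cap D')$. It remains to show $K\cap D' = 0$. Now $K\cap D'$ embeds in $M/D \cong D'$... that is not obviously helpful. Instead, note $K\cap D\le_e K = D\oplus(K\cap D')$; since the sum is direct, essentiality forces $K\cap D'$ to satisfy $(K\cap D)\cap(K\cap D')\le_e K\cap D'$, but $(K\cap D)\cap(K\cap D') = 0$, hence $K\cap D' = 0$. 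Therefore $K = D$ is a direct summand of $M$, and $M$ is a CLS-module.

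The main obstacle I anticipate is the first inclusion $D\subseteq K$: one must correctly leverage that an $\mathscr{S}$-closed submodule is closed (stated in the introduction) to upgrade the weak essential condition $K\cap D\le_e D$ into $D\subseteq K$, being careful that the general "transfer of essentiality" lemma is applied in the right ambient module. Interestingly, the nonsingularity hypothesis $M/K$ nonsingular is not actually needed for this argument — pure closedness of $K$ suffices — so I would double-check whether the statement is really using the full strength of $\mathscr{S}$-closedness or whether it is enough that $K$ is closed; if the latter, the proof is exactly the one above and shows more, namely that $\mathscr{G}$-extending implies CS, which is already known, so the intended reading is simply that this recovers the CLS property as a special case.
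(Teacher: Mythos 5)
Your second half is sound: once $D\subseteq K$ is known, modularity gives $K=D\oplus(K\cap D')$, and $D=K\cap D\leq_e K$ forces $K\cap D'=0$, so $K=D$. The genuine gap is in the first half. The ``general lemma'' you invoke --- if $A\leq_e B$ and $A\leq A'$ then $A'\leq_e A'+B$ --- is false. Take $M=\mathbb{Z}\times\mathbb{Z}_2$, $B=\mathbb{Z}\times 0$, $A=4\mathbb{Z}\times 0$, and $A'=\mathbb{Z}(2,\bar 1)=\{(2n,\bar n):n\in\mathbb{Z}\}$; then $A\leq_e B$ and $A\leq A'$, but $A'+B=M$ and $\mathbb{Z}(0,\bar 1)\cap A'=0$, so $A'$ is not essential in $A'+B$. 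Hence you never establish $K\leq_e K+D$, and the inclusion $D\subseteq K$ does not follow. Your closing observation is in fact the warning sign: if closedness of $K$ alone sufficed, your argument would prove that every $\mathscr{G}$-extending module is CS, which is false and contradicted by the paper's own Example 2.5 ($\mathbb{Q}\oplus\mathbb{Z}_{p^n}$, $n\geq 2$, is $\mathscr{G}$-extending but not CS). So the nonsingularity of $M/K$ must be used, and it is used precisely to get $D\subseteq K$.

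The paper's proof supplies the missing step as follows: since $K\cap D\leq_e D$, the quotient $D/(K\cap D)$ is singular; on the other hand $D/(K\cap D)\cong(D+K)/K$ embeds in $M/K$, which is nonsingular because $K$ is $\mathscr{S}$-closed, so $D/(K\cap D)$ is also nonsingular and therefore zero. This yields $D=K\cap D\subseteq K$, and then $D\leq_e K$ together with the fact that the direct summand $D$ is closed in $M$ gives $K=D$. Replacing your first step by this singular-versus-nonsingular argument (and dropping the claim that closedness alone is enough) repairs the proof and brings it in line with the paper's.
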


\begin{proof}
Let $N$ be an $\mathscr{S}$-closed submodule of $M$. There exists a
direct summand $D$ of $M$ such that $N\cap D\leq_eN$ and $N\cap
D\leq_eD$. Note that $D/(N\cap D)$ is both singular and nonsingular.
Then $D=N\cap D$ and so $N=D$. Therefore, $M$ is a CLS-module.
\end{proof}

In general, a CLS-module need not be a $\mathscr{G}$-extending
module as the following example shows.

\begin{exa} Let $K$ be a field and $V=K\times K$. Consider the ring
$R$ of $2\times 2$ matrix of the form $(a_{ij})$ with $a_{11},
a_{22}\in K, a_{12}\in V, a_{21}=0$ and $a_{11}=a_{22}$. Following
[8, Example 14], $R_{R}$ is a CLS module which is not a module with
$(C_{11})$. Therefore, $R_{R}$ is not a $\mathscr{G}$-extending
module by [1, Proposition 1.6].
\end{exa}

Applying Proposition 2.3, we will give some examples which are CLS
modules, but not CS-modules as follows.

\begin{exa} Let $M_{1}$ and $M_{2}$ be Abelian groups (i.e., $\mathbb{Z}$-modules) with $M_{1}$ divisible
and $M_{2}=\mathbb{Z}_{p^{^{n}}}$, where $p$ is a prime and $n$ is a
positive integer.  Since $M=M_{1}\oplus M_{2}$ is
$\mathscr{G}$-extending by [1, Example 3.4], it is a CLS module by
Proposition 2.3. But $M$ is not CS, when $M_{1}$ is torsion-free. In
particular, $\mathbb{Q}\oplus \mathbb{Z}_{p^{^{n}}}(n\geq 2,
p=prime)$ is a CLS module, but not CS.
\end{exa}

\begin{exa}
Let $M_{1}$ be a $\mathscr{G}$-extending module with a finite
composition series, $0=X_{0}\leq X_{1}\leq \cdot\cdot\cdot\leq
X_{m}=M_{1}$. Let $M_{2}=X_{m}/X_{m-1}\oplus \cdot\cdot\cdot\oplus
X_{1}/X_{0}$. Since $M=M_{1}\oplus M_{2}$ is $\mathscr{G}$-extending
by [1, Example 3.4], it is a CLS module by Proposition 2.3. But $M$
is not CS in general. In particular, $M\oplus (U/V)$ is a CLS
module, but not CS, where $M$ is a uniserial module with unique
composition series $0\neq V\subset U\subset M$.

\end{exa}

\begin{prop}
Let $M$ be a nonsingular module. Then the following conditions are
equivalent.
\begin{enumerate}[(i)]
\item $M$ is a CS-module.
\item $M$ is a $\mathscr{G}$-extending module.
\item $M$ is a CLS-module.
\end{enumerate}
\end{prop}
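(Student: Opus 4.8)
The plan is to prove the cycle of implications (i) $\Rightarrow$ (ii) $\Rightarrow$ (iii) $\Rightarrow$ (i). Two of the links are essentially free. The implication (ii) $\Rightarrow$ (iii) is exactly Proposition~2.3. The implication (i) $\Rightarrow$ (ii) holds for an \emph{arbitrary} module: if $X\leq M$ and $D$ is a direct summand of $M$ with $X\leq_e D$, then $X\cap D=X$ is essential in both $X$ and $D$, so $D$ witnesses the $\mathscr{G}$-extending condition for $X$. Hence the real content of the proposition is the implication (iii) $\Rightarrow$ (i): a nonsingular CLS-module is CS.

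The crux is the observation that \emph{in a nonsingular module $M$, a submodule $N$ is closed if and only if it is $\mathscr{S}$-closed}. One direction is recorded in the Introduction. For the converse, assume $N$ is closed in $M$ and put $K/N=Z(M/N)$; the claim is that $N\leq_e K$. Take $0\neq x\in K$. If $x\in N$ there is nothing to check. If $x\notin N$, then $x+N$ is a nonzero element of the singular submodule $Z(M/N)$, so $xI\subseteq N$ for some essential right ideal $I$ of $R$. Here nonsingularity enters: if $xI=0$, then the right annihilator of $x$ contains the essential right ideal $I$, forcing $x\in Z(M)=0$, a contradiction; hence $xi\neq 0$ for some $i\in I$, and then $0\neq xi\in xR\cap N$. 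Thus $N\leq_e K$, and since $N$ is closed we conclude $K=N$, i.e., $Z(M/N)=0$, so $N$ is $\mathscr{S}$-closed.

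Granting this, (iii) $\Rightarrow$ (i) is short. Let $M$ be nonsingular and CLS and let $N\leq M$. By Zorn's Lemma choose a closure $\overline N$ of $N$ in $M$, so that $N\leq_e\overline N$ and $\overline N$ is closed in $M$. By the crux, $\overline N$ is $\mathscr{S}$-closed, hence a direct summand of $M$ because $M$ is a CLS-module. Therefore every submodule of $M$ is essential in a direct summand, that is, $M$ is CS, and the cycle closes.

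The only genuinely new ingredient is the ``closed $\Rightarrow$ $\mathscr{S}$-closed'' lemma for nonsingular modules, and the point requiring care is precisely the use of nonsingularity to exclude $xI=0$; dropping that hypothesis breaks the argument, in accordance with the Introduction's remark that $0$ is closed but not $\mathscr{S}$-closed once $M$ is singular. The remaining pieces — existence of closures and the two easy implications — are routine, so I do not anticipate any further difficulty.
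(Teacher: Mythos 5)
Your proposal is correct, but it takes a different (and more informative) route than the paper, whose entire proof of this proposition is the citation ``By [1, Proposition 1.8] and [8, Corollary 5].'' You instead supply a complete, self-contained argument. The two easy implications are handled properly: (i) $\Rightarrow$ (ii) indeed holds for arbitrary modules since $X\leq_e D$ gives $X\cap D=X$ essential in both $X$ and $D$, and (ii) $\Rightarrow$ (iii) is Proposition~2.3. Your key lemma --- that in a nonsingular module the closed submodules coincide with the $\mathscr{S}$-closed ones --- is exactly the content hiding behind the paper's references, and your verification is sound: for $0\neq x\in K$ with $x\notin N$ and $K/N=Z(M/N)$, the essential right ideal $I$ with $xI\subseteq N$ must satisfy $xI\neq 0$ (else $x\in Z(M)=0$), so $xR\cap N\neq 0$, giving $N\leq_e K$ and hence $K=N$ by closedness. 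Combined with the standard existence of closures via Zorn's Lemma and transitivity of essentiality, this yields (iii) $\Rightarrow$ (i). What your approach buys is transparency: it isolates precisely where nonsingularity is used (to rule out $xI=0$), which matches the Introduction's remark that $0$ is closed but not $\mathscr{S}$-closed in a module with nonzero singular submodule; the paper's citation-only proof conceals this. No gaps.
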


\begin{proof}
By [1, Proposition 1.8] and [8, Corollary 5].
\end{proof}

\begin{prop}
Let $M$ be a CLS-module and $X$ be a submodule of $M$. If
$Z(M/X)=0$, then $M/X$ is a CS-module.
\end{prop}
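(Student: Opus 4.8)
The plan is to reduce the assertion to the preliminary results on nonsingular CLS-modules rather than to argue with closed submodules of $M/X$ directly. The first move is the observation that makes everything work: the hypothesis $Z(M/X)=0$ says precisely that $M/X$ is nonsingular, and by Goodearl's definition this is exactly the statement that $X$ is an $\mathscr{S}$-closed submodule of $M$. Since $M$ is a CLS-module, $X$ is then a direct summand of $M$, say $M=X\oplus X'$ for some submodule $X'\leq M$.

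Next I would transport the problem to the complement $X'$. On the one hand $M/X\cong X'$, so $X'$ is a nonsingular module (it inherits $Z(X')=0$ from the isomorphism with $M/X$). On the other hand $X'$ is a direct summand of the CLS-module $M$, hence $X'$ is itself a CLS-module by Lemma~2.1. Now $X'$ is a nonsingular CLS-module, so Proposition~2.7 (the equivalence, for nonsingular modules, of being CS, $\mathscr{G}$-extending, and CLS) applies and gives that $X'$ is a CS-module. Since $M/X\cong X'$, we conclude that $M/X$ is a CS-module, as desired.

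I do not expect a genuine obstacle here: the proof is a short chain of the results already established in Section~2. The only step that must not be skipped is the initial translation of ``$Z(M/X)=0$'' into ``$X$ is $\mathscr{S}$-closed in $M$'', since that is what activates the CLS hypothesis and splits $X$ off as a direct summand; after that, the verification that the complement $X'$ is simultaneously nonsingular (from $X'\cong M/X$) and a CLS-module (from being a direct summand) is routine, and Proposition~2.7 finishes the argument.
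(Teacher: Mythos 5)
Your proposal is correct and follows exactly the paper's own argument: translate $Z(M/X)=0$ into ``$X$ is $\mathscr{S}$-closed,'' split off $X$ as a direct summand $M=X\oplus X'$, and apply Lemma~2.1 and Proposition~2.7 to the nonsingular CLS-module $X'\cong M/X$. You have merely written out the details the paper leaves implicit.
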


\begin{proof}
Since $M$ is a CLS-module, $X$ is a direct summand of $M$. Write
$M=X\oplus X', X'\leq M$. Then $M/X$ is a CS-module by Lemma 2.1 and
Proposition 2.7.
\end{proof}

\begin{cor}(1, Proposition 1.9)
If $M$ is $\mathscr{G}$-extending, $X\unlhd M$, and $Z(M/X)=0$, then
$M/X$ is a CS-module.
\end{cor}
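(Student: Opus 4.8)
The plan is to deduce this directly from the two preceding propositions, since it is essentially their composition. First I would invoke \propref{} on $\mathscr{G}$-extending modules (Proposition 2.3): since $M$ is $\mathscr{G}$-extending, $M$ is a CLS-module. The notation $X\unlhd M$ in particular gives that $X$ is a submodule of $M$, so the hypotheses of the CLS-quotient result (Proposition 2.9) are met: $M$ is a CLS-module, $X\leq M$, and $Z(M/X)=0$. Applying that proposition yields immediately that $M/X$ is a CS-module, which is the claim.

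Concretely, the chain is: $\mathscr{G}$-extending $\Rightarrow$ CLS (Proposition 2.3) $\Rightarrow$ ($X$ is a direct summand of $M$, say $M=X\oplus X'$) $\Rightarrow$ $M/X\cong X'$ is a direct summand of the CLS-module $M$, hence CLS by Lemma 2.1 $\Rightarrow$ $M/X$ is a nonsingular CLS-module, hence CS by Proposition 2.7. I do not anticipate any genuine obstacle here; the only thing to be careful about is that one does not need the full strength of $X\unlhd M$ (fully invariant, or whatever the paper's convention is) — being a submodule suffices for Proposition 2.9 to apply — and that the nonsingularity hypothesis $Z(M/X)=0$ is exactly what promotes the CLS-quotient to a CS-module via Proposition 2.7. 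So the proof is a one-line appeal: \emph{By Proposition 2.3, $M$ is a CLS-module; now apply Proposition 2.9.}
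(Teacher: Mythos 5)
Your proposal is correct and is exactly the argument the paper intends: the corollary carries no written proof precisely because it is the composition of Proposition 2.3 ($\mathscr{G}$-extending $\Rightarrow$ CLS) with the immediately preceding Proposition 2.8 (CLS plus $Z(M/X)=0$ $\Rightarrow$ $M/X$ is CS), whose own proof is the decomposition $M=X\oplus X'$ and the appeal to Lemma 2.1 and Proposition 2.7 that you spell out. The only slip is a harmless one of numbering (the quotient result is Proposition 2.8, not 2.9); your observation that only $X\leq M$ is needed, not the full force of $X\unlhd M$, is also accurate.
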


\begin{cor}(1, Corollary 3.11(i))
Let $M$ be a $\mathscr{G}$-extending module. If $D$ is a direct
summand of $M$ such that $Z(D)=0$, then $D$ is a CS-module.
\end{cor}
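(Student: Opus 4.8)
The plan is to obtain this as an immediate specialization of the material already assembled in this section; essentially no new computation should be required. First I would invoke Proposition 2.3: since $M$ is $\mathscr{G}$-extending, it is a CLS-module. Then, because $D$ is a \emph{direct summand} of $M$ (not merely a submodule), Lemma 2.1 applies and gives that $D$ is again a CLS-module.

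At this point the hypothesis $Z(D)=0$ enters, and it is exactly what is needed: it says precisely that $D$ is nonsingular. So Proposition 2.7 becomes available for the module $D$, and the implication (iii)$\Rightarrow$(i) in that proposition converts the statement ``$D$ is a CLS-module'' into ``$D$ is a CS-module'', which is the desired conclusion.

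It is worth recording an alternative route that avoids Proposition 2.7 and instead leans on Proposition 2.8, in case one prefers to argue with quotients rather than direct summands. Write $M=D\oplus D'$ for some submodule $D'$ of $M$. Then $M/D'\cong D$, so $Z(M/D')\cong Z(D)=0$. Applying Proposition 2.8 to the CLS-module $M$ with $X=D'$ yields that $M/D'$ is a CS-module, hence so is $D$; this is really just Corollary 2.9 with $X=D'$.

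I do not anticipate a genuine obstacle: both routes are short chains of already-proved implications. The only points requiring care are purely bookkeeping — that ``$Z(D)=0$'' is literally the nonsingularity hypothesis demanded by Proposition 2.7 (so no extra argument is needed to pass between ``nonsingular'' and ``$Z(D)=0$''), and that one uses Lemma 2.1 in the form asserting that a direct summand of a CLS-module is CLS. Once those are in place, the corollary follows in a line or two.
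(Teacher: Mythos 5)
Your proposal is correct and follows exactly the route the paper intends: the corollary is a direct specialization of Proposition 2.3, Lemma 2.1, and Proposition 2.7 (equivalently, of Proposition 2.8 applied to a complementary summand), which is why the paper states it without a separate proof. Both of your chains of implications are valid and no further argument is needed.
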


\begin{prop}
Let $K\leq_eM$ such that $K$ is a CLS-module and for each
$e^2=e\in$End$(K)$ there exists $\bar{e}^2=\bar{e}\in$End$(M)$ such
that $\bar{e}|_K=e$. Then $M$ is a CLS-module.
\end{prop}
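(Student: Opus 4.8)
The plan is to take an arbitrary $\mathscr{S}$-closed submodule $N$ of $M$ and exhibit an idempotent of $\mathrm{End}(M)$ whose image is exactly $N$. First I would restrict attention to $K$: since $K/(N\cap K)\cong (K+N)/N$ embeds in the nonsingular module $M/N$, it is nonsingular, so $N\cap K$ is $\mathscr{S}$-closed in $K$. Because $K$ is a CLS-module this gives a decomposition $K=(N\cap K)\oplus L$ for some $L\leq K$. Let $e\in\mathrm{End}(K)$ be the projection of $K$ onto $N\cap K$ with kernel $L$; it is idempotent, so by hypothesis it lifts to an idempotent $\bar e\in\mathrm{End}(M)$ with $\bar e|_K=e$. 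Put $A=\bar e(M)$ and $B=(1-\bar e)(M)$, so $M=A\oplus B$.

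The next step is to compute how $A$ and $B$ meet $K$. Since $\bar e$ is idempotent, $A=\{m:\bar e(m)=m\}$ and $B=\ker\bar e$; and since $\bar e|_K=e$, for $k\in K$ we have $k\in A$ iff $e(k)=k$ iff $k\in N\cap K$, while $k\in B$ iff $e(k)=0$ iff $k\in L$. Hence $A\cap K=N\cap K$ and $B\cap K=L$. Because $K\leq_e M$ and $M=A\oplus B$, essentiality passes to the summands: if $0\neq T\leq A$ then $T\cap K\neq 0$ and $T\cap K\subseteq A\cap K$, so $A\cap K\leq_e A$; likewise $B\cap K\leq_e B$.

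Now I would show $N=A$. For $A\subseteq N$: we have $N\cap K=A\cap K\leq_e A$ and $N\cap K\subseteq A\cap N\subseteq A$, hence $A\cap N\leq_e A$; therefore $A/(A\cap N)$ is singular, but $A/(A\cap N)\cong (A+N)/N$ is a submodule of the nonsingular module $M/N$, so $A/(A\cap N)=0$, i.e. $A\subseteq N$. For $N\subseteq A$: we have $(B\cap N)\cap K=(B\cap K)\cap(N\cap K)=L\cap(N\cap K)=0$, and since $K\leq_e M$ this forces $B\cap N=0$; then, using $A\subseteq N$ and the modular law, $N=N\cap(A\oplus B)=A\oplus(N\cap B)=A$. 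Thus $N=A=\bar e(M)$ is a direct summand of $M$, and $M$ is a CLS-module.

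Once $\bar e$ is in hand the argument is essentially bookkeeping; the point that needs care — the main obstacle — is propagating the \emph{local} equality $A\cap K=N\cap K$ up to the \emph{global} equality $A=N$, rather than being stuck with agreement only on $K$. This is precisely where the two standard nonsingularity facts are used: a module that is a quotient by an essential submodule is singular, and a submodule of a nonsingular module is nonsingular. On the $A$-side the singular/nonsingular clash kills $A/(A\cap N)$, forcing $A\subseteq N$; on the $B$-side the essentiality of $K$ in $M$ kills $B\cap N$, forcing $N\subseteq A$.
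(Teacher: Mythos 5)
Your proof is correct and follows essentially the same route as the paper's: restrict $N$ to $K$, use the CLS property of $K$ to get an idempotent $e$ with image $N\cap K$, lift it to $\bar e$, and use the essentiality of $K$ together with the singular/nonsingular clash to identify $\bar e(M)$ with $N$. The only (harmless) divergence is at the very end, where you derive $N\subseteq\bar e(M)$ from $B\cap N=0$ and the modular law, while the paper instead notes $\bar e K\leq_e N$ and that $\bar e M$ is closed, being a direct summand.
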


\begin{proof}
Assume $K$ is a CLS-module. Let $X$ be an $\mathscr{S}$-closed
submodule of $M$. Then $K=(X\cap K)\oplus K', K'\leq K$. Let $X\cap
K=eK$, where $e^2=e\in$End$(K)$. By hypothesis, there exists
$\bar{e}^2=\bar{e}\in$End$(M)$ such that $\bar{e}|_K=e$. Since
$K\leq_eM$, $\bar{e}K\leq_e\bar{e}M$. Observe that $\bar{e}M\cap
X\leq_e\bar{e}M$. But $\bar{e}M/(\bar{e}M\cap X)$ is nonsingular.
Hence $\bar{e}M\leq X$. Thus $X=\bar{e}M$ as $\bar{e}K\leq_eX$.
Therefore, $M$ is a CLS-module.
\end{proof}

By analogy with the proof of [2, Corollary 3.14], we can obtain

\begin{cor}
Let $M$ be a module. If $M$ is CLS, then so is the rational hull of
$M$.
\end{cor}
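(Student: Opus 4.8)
I would prove Corollary~2.12 by reducing it to Proposition~2.11. Realize the rational hull $\widehat{M}$ inside the injective hull $E(M)$ as the (unique) largest submodule of $E(M)$ containing $M$ as a dense submodule. The plan is then to check that $M$, sitting inside $\widehat{M}$, satisfies the two hypotheses of Proposition~2.11 with $\widehat{M}$ playing the role of the ambient module: namely, $M$ is essential in $\widehat{M}$, and every idempotent of $\mathrm{End}(M)$ lifts (via restriction) to an idempotent of $\mathrm{End}(\widehat{M})$. Granting these, Proposition~2.11 immediately gives that $\widehat{M}$ is a CLS-module.

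For the first hypothesis, $M$ is dense in $\widehat M$ (write $M\leq_d\widehat M$) by construction of the rational hull, and a dense submodule is essential, so $M\leq_e\widehat M$. For the second, I first record that every $f\in\mathrm{End}(M)$ extends \emph{uniquely} to an $R$-endomorphism $\bar f$ of $\widehat M$. Existence: regard $f$ as a map $M\to E(M)$ and extend it, using injectivity of $E(M)$, to some $\tilde f\in\mathrm{End}(E(M))$; then I claim $\widehat M+\tilde f(\widehat M)$ is again a rational extension of $M$. Indeed, since $\tilde f$ carries $M$ into $M\subseteq\widehat M$, the quotient $\big(\widehat M+\tilde f(\widehat M)\big)/\widehat M$ is isomorphic to a quotient of $\widehat M/M$, and $\mathrm{Hom}_R(\widehat M/M,E(M))=0$ (because $M\leq_d\widehat M$) forces $\mathrm{Hom}_R$ into $E(M)$ of any quotient of $\widehat M/M$ to vanish as well; hence $\widehat M\leq_d\widehat M+\tilde f(\widehat M)$, so $M\leq_d\widehat M+\tilde f(\widehat M)$, and maximality of $\widehat M$ yields $\tilde f(\widehat M)\subseteq\widehat M$. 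Thus $\bar f:=\tilde f|_{\widehat M}\in\mathrm{End}(\widehat M)$ extends $f$. Uniqueness: if $g_1,g_2\in\mathrm{End}(\widehat M)$ both restrict to $f$ on $M$, then $g_1-g_2$ kills $M$ and so factors through $\widehat M/M\to E(M)$, which is zero by density of $M$ in $\widehat M$; hence $g_1=g_2$.

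Given $e^2=e\in\mathrm{End}(M)$, let $\bar e\in\mathrm{End}(\widehat M)$ be its unique extension. Then $\bar e^{\,2}$ also restricts to $e^2=e$ on $M$, so by uniqueness $\bar e^{\,2}=\bar e$; that is, idempotents of $\mathrm{End}(M)$ lift to idempotents of $\mathrm{End}(\widehat M)$ compatibly with restriction to $M$. Now both hypotheses of Proposition~2.11 hold for $M\leq_e\widehat M$ (with $M$ a CLS-module by assumption), and therefore $\widehat M$ is a CLS-module.

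The only genuinely delicate point is the invariance $\tilde f(\widehat M)\subseteq\widehat M$ — i.e.\ that the injective extension of an endomorphism of $M$ maps the rational hull back into itself rather than merely into $E(M)$; the rest is bookkeeping. If one prefers, this can simply be quoted: ``every endomorphism of a module extends uniquely to an endomorphism of its rational hull'' is a standard property of rational hulls, and this is presumably exactly how [2, Corollary 3.14] is established in the CS / $\mathscr{G}$-extending setting, the present statement being obtained by the same argument with Proposition~2.11 in place of the corresponding extending-module lemma.
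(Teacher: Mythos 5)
Your proof is correct and is essentially the argument the paper intends: the paper's one-line proof (``by analogy with [2, Corollary~3.14]'') amounts to exactly your reduction to Proposition~2.11, using that $M$ is dense (hence essential) in $\widehat{M}$ and that every endomorphism of $M$ extends uniquely to $\widehat{M}$, so idempotents lift to idempotents. You have simply supplied in full the standard rational-hull facts that the paper delegates to the citation.
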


\section{Direct sums of CLS modules}

It is well known that a direct sum of CLS- modules is not, in
general, a CLS-module (see [8]). In this section, direct sums of
CLS-modules are studied. It is shown that if $M=M_1\oplus M_2$,
where $M_1$ and $M_2$ are CLS-modules and $M_1$ and $M_2$ are
relatively ojective, then $M$ is a CLS-module and some known results
are generalized. Tercan [8] proved that if a module $M=M_{1}\oplus
M_{2}$ where $M_{1}$ and $M_{2}$ are CS-modules such that $M_{1}$ is
$M_{2}$-injective, then $M$ is a CS-module if and only if $Z_{2}(M)$
is a CS-module. It is shown that Tercan's claim is not true in this
section.

Let $A,~B$ be right $R$-modules. Recall that $B$ is $A$-ojective [6]
if and only if for any complement $C$ of $B$ in $A\oplus B, A\oplus
B$ decomposes as $A\oplus B=C\oplus A' \oplus B'$ with $A'\leq A$
and $B'\leq B$. $A$ and $B$ are relatively ojective if $A$ is
$B$-ojective and $B$ is $A$-ojective.

\begin{lem}
Let $M=A\oplus B$, where $B$ is $A$-ojective and $A$ is a
CLS-module. If $X$ is an $\mathscr{S}$-closed submodule of $M$ such
that $X\cap B=0$, then $M$ decomposes as $M=D\oplus A'\oplus B'$,
where $A'\leq A, B'\leq B$.
\end{lem}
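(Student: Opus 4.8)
The plan is to exploit the definition of $A$-ojectivity applied to a suitable complement. Since $X \cap B = 0$, the submodule $X$ sits inside $A \oplus B$ meeting $B$ trivially, so I would first enlarge $X$ to a complement $C$ of $B$ in $M = A \oplus B$ that contains $X$; such a $C$ exists because any submodule meeting $B$ trivially extends to a complement of $B$ (Zorn's lemma). Now $B$ being $A$-ojective gives a decomposition $M = A \oplus B = C \oplus A' \oplus B'$ with $A' \le A$ and $B' \le B$. The point of this step is to trade the abstract submodule $X$ for something built from a direct summand $C$ of $M$.

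Next I would analyze the relationship between $X$ and $C$. We have $X \le C$ and, since $C$ is a complement of $B$, the projection $\pi \colon M \to M/B \cong A$ restricts to a monomorphism on $C$, and $C \cong M/B$ is therefore isomorphic to a submodule of $A$ — in fact, writing $M = C \oplus A' \oplus B'$, one checks $C$ is (isomorphic to) a complement, hence $C \oplus B' \le_e$ something; more directly, the composite $C \hookrightarrow M \twoheadrightarrow M/(A' \oplus B')$ is an isomorphism onto $A \oplus B / (A'\oplus B')$. The key observation I want is that $C$, being a homomorphic image of $M$ modulo the summand $A' \oplus B'$ and simultaneously embeddable in $A$, is itself a CLS-module: it is a direct summand of $M$ but I do not yet know $M$ is CLS, so instead I use that $C \cong M/(A' \oplus B')$ and that this quotient, since $A' \oplus B'$ is a direct summand, is isomorphic to the complementary summand $C$; the honest route is to show $C$ is isomorphic to a direct summand of $A$. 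Indeed $C \cap B = 0$ and $C \oplus B \le_e M$ (complement), while $M = C \oplus A' \oplus B'$ forces $A' \oplus B' \cong M/C \cong B$ up to essential extension — so $A = A \cap (C \oplus A' \oplus B')$ and projecting $A$ into $C$ along $A' \oplus B'$ yields that $C$ is isomorphic to the image of $A$, a direct summand of $A$, hence a CLS-module by Lemma 2.1.

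Then within the CLS-module $C$, the submodule $X$ is $\mathscr{S}$-closed: $C/X$ embeds in $M/X$ which is nonsingular (as $X$ is $\mathscr{S}$-closed in $M$), so $C/X$ is nonsingular, i.e., $X$ is $\mathscr{S}$-closed in $C$. Since $C$ is a CLS-module, $X$ is a direct summand of $C$, say $C = X \oplus C''$. Feeding this back into $M = C \oplus A' \oplus B' = X \oplus C'' \oplus A' \oplus B'$, I would then need to repackage $C'' \oplus A'$ as a submodule of $A$ and $B'$ already sits in $B$; so setting $D = X$ and absorbing $C'' \oplus (A' \text{-part})$ appropriately gives $M = D \oplus A'' \oplus B'$ with $A'' \le A$, $B' \le B$.

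The main obstacle I anticipate is the bookkeeping in the previous paragraph: after the ojectivity decomposition $M = C \oplus A' \oplus B'$ and the splitting $C = X \oplus C''$, the piece $C''$ lives inside $C$ but not obviously inside $A$, so one cannot simply declare $A' \oplus C''$ to be a submodule of $A$. Resolving this requires either re-choosing the complement $C$ more carefully (e.g. taking $C$ to be an $\mathscr{S}$-closed submodule of $M$ containing $X$ with $C \cap B = 0$ maximal, and then arguing $C$ is itself a direct summand, which circularly needs a CLS-type property) or exploiting that $X$ itself is $\mathscr{S}$-closed in $M$ — not merely in $C$ — to run the ojectivity argument with $C$ replaced by a complement of $B$ chosen to be $X$-relevant, and then showing the resulting $A'$ can be taken to contain the relevant copy of $C''$. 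I expect the cleanest fix is to apply $A$-ojectivity directly to a complement $C$ of $B$ with $X \le C$ and $C$ chosen so that $C$ is closed (hence $\mathscr{S}$-closed, since $M/C \hookrightarrow$ nonsingular via $M/C \cong$ submodule of $A$ is not automatic — this is exactly where nonsingularity of $M/X$ must be leveraged), then use that $X$ is a direct summand of the CLS-module $C$ and that the ojective decomposition already exhibits $C$, hence $X$, as split off in a way compatible with the $A \oplus B$ grading.
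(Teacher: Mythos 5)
Your strategy --- enlarge $X$ to a complement $C$ of $B$ in all of $M$, apply ojectivity to $C$, then split $X$ off inside $C$ --- has two genuine gaps, one of which you flag yourself. First, the claim that $C$ is isomorphic to a direct summand of $A$ (hence a CLS-module by Lemma 2.1) fails. The projection $\pi_A\colon M\to A$ along $B$ is indeed injective on $C$, but its image is $(C\oplus B)\cap A$, and since $C$ is a complement of $B$ we have $C\oplus B\leq_e M$, whence $(C\oplus B)\cap A\leq_e A$. So $C$ is isomorphic to an \emph{essential} submodule of $A$; if that image were moreover a direct summand it would equal all of $A$, forcing $M=C\oplus B$, which you do not have. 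Essential submodules of CLS-modules need not be CLS, so you cannot conclude $C=X\oplus C''$. Second, even granting that splitting, $M=X\oplus C''\oplus A'\oplus B'$ does not prove the lemma: the statement requires the complement of $X$ to have the form $A''\oplus B''$ with $A''\leq A$ and $B''\leq B$, and $C''$ sits inside $C$, not inside $A$. You correctly identify this as the main obstacle, but none of the fixes you sketch is actually carried out.

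The paper's proof sidesteps both problems by shrinking $M$ rather than enlarging $X$. It splits $A=(X\cap A)\oplus A_1$ (using that $X\cap A$ is $\mathscr{S}$-closed in the CLS-module $A$), observes that $A_1$ is nonsingular and hence CS by Proposition 2.7, chooses a direct summand $A_1'$ of $A_1$ containing $(X\oplus B)\cap A_1$ essentially, and sets $N=(X\cap A)\oplus A_1'\oplus B$. Then $X\oplus B\leq_e N$, so the closed submodule $X$ is a complement of $B$ \emph{in $N$}, and $B$ is $\bigl((X\cap A)\oplus A_1'\bigr)$-ojective because ojectivity passes to direct summands. Applying ojectivity inside $N$ gives $N=X\oplus A'\oplus B'$ with $A'\leq(X\cap A)\oplus A_1'\leq A$ and $B'\leq B$, and the leftover summand $A_1''\leq A$ is adjoined at the end. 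This is precisely the idea your argument is missing: arrange for $X$ itself, not an enlargement of it, to be the complement to which ojectivity is applied, so that the complementary summands stay inside $A$ and $B$.
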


\begin{proof}
Let $X$ be an $\mathscr{S}$-closed submodule of $M$ with $X\cap
B=0$. Then $M/X$ is nonsingular. Note that $X\cap A$ is an
$\mathscr{S}$-closed submodule of $A$. Hence $X\cap A$ is a direct
summand of $A$. Write $A=(X\cap A)\oplus A_1, A_1\leq A$. By Lemma
2.1 and Proposition 2.7, $A_1$ is a CS-module. Let $K=(X\oplus
B)\cap A$. Then $X\oplus B=K\oplus B$ and $K=(X\cap A)\oplus(K\cap
A_1)$. There exists a closed submodule $A_1'$ of $A_1$ such that
$K\cap A_1\leq_eA_1'$. Then $A_1'$ is a direct summand of $A_1$.
Write $A_1=A_1'\oplus A_1{''}, A_1{''}\leq A_1$. Now $X\oplus
B=K\oplus B=(X\cap A)\oplus(K\cap A_1)\oplus B\leq_e(X\cap A)\oplus
A_1'\oplus B$. Let $N=(X\cap A)\oplus A_1'\oplus B$. Then $X$ is a
complement of $B$ in $N$. Now $B$ is $(X\cap A)\oplus A_1'$-ojective
by [6, Proposition 8].  By [6, Theorem 7], $N=X\oplus A'\oplus B'$,
where $A'\leq(X\cap A)\oplus A_1'$ and $B'\leq B$. Therefore,
$M=X\oplus A'\oplus A_1{''}\oplus B'$, as required.
\end{proof}

\begin{thm}
Let $M=M_1\oplus M_2$, where $M_1$ and $M_2$ are CLS-modules. If
$M_1$ and $M_2$ are relatively ojective, then $M$ is a CLS-module.
\end{thm}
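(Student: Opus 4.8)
The plan is to reduce the general case to the situation already handled in \lemref{...} (the lemma immediately preceding the theorem), where the $\mathscr{S}$-closed submodule meets one of the summands trivially. So let $X$ be an $\mathscr{S}$-closed submodule of $M=M_1\oplus M_2$; we must show $X$ is a direct summand. First I would look at $X\cap M_2$: since $M/X$ is nonsingular and $(X\cap M_2)$ is a kernel of the composite $M_2\hookrightarrow M\to M/X$ (restricted appropriately), $X\cap M_2$ should be an $\mathscr{S}$-closed submodule of $M_2$, hence a direct summand of $M_2$ because $M_2$ is a CLS-module. Write $M_2=(X\cap M_2)\oplus M_2'$ with $M_2'\le M_2$. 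The point of splitting off $X\cap M_2$ is that, modulo it, the image of $X$ now meets the relevant copy of $M_2'$ trivially.

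Next I would pass to the module $\bar M = M/(X\cap M_2) \cong M_1\oplus M_2'$ and consider $\bar X = X/(X\cap M_2)$. One checks that $\bar X$ is still $\mathscr{S}$-closed in $\bar M$ (quotient by a submodule contained in $X$ preserves nonsingularity of the quotient) and that $\bar X\cap M_2' = 0$. Now $M_1$ is a CLS-module and $M_2'$, being a direct summand of the CLS-module $M_2$, is a CLS-module by \lemref{...} (Lemma 2.1); moreover relative ojectivity passes to direct summands (by [6, Proposition 8], as used in the preceding lemma), so $M_2'$ is $M_1$-ojective. Hence \lemref{...} applies to $\bar M = M_1\oplus M_2'$ with $A=M_1$, $B=M_2'$, $X=\bar X$: it gives a decomposition $\bar M = \bar D\oplus A'\oplus B'$ with $\bar D = \bar X$, $A'\le M_1$, $B'\le M_2'$. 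In particular $\bar X$ is a direct summand of $\bar M$.

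Finally I would lift this splitting back to $M$. Since $X\cap M_2$ is a direct summand of $M_2$ and hence of $M$, and $\bar X = X/(X\cap M_2)$ is a direct summand of $\bar M = M/(X\cap M_2)$, a routine argument (the correspondence between summands of $M$ containing $X\cap M_2$ and summands of $M/(X\cap M_2)$, using that $X\cap M_2$ itself splits off) shows $X$ is a direct summand of $M$. Therefore $M$ is a CLS-module. I would also note the symmetric roles of $M_1$ and $M_2$ are genuinely used only through the relative ojectivity hypothesis, so no extra case analysis is needed.

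The main obstacle I anticipate is the bookkeeping in the reduction step: verifying carefully that $X\cap M_2$ is $\mathscr{S}$-closed in $M_2$ (not merely closed), that $\bar X$ remains $\mathscr{S}$-closed in $\bar M$ with $\bar X\cap M_2'=0$, and that the ojectivity hypothesis survives passage to the summand $M_2'$ — each is short but must be done correctly for the application of the preceding lemma to be legitimate. The lifting of the direct-summand decomposition through the quotient by $X\cap M_2$ is the other place where one must be slightly careful, though it is standard once $X\cap M_2$ is known to split.
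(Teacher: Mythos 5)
Your proposal is correct, but it is organized differently from the paper's proof, so a comparison is worth recording. The paper splits off \emph{both} intersections: it writes $M_1=(X\cap M_1)\oplus M_1'$ and $M_2=(X\cap M_2)\oplus M_2'$, observes the identity $X=(X\cap M_1)\oplus(X\cap M_2)\oplus\bigl(X\cap(M_1'\oplus M_2')\bigr)$, notes that $M_1'$ and $M_2'$ are nonsingular CLS-modules and hence CS-modules (Proposition 2.7) that are still relatively ojective, and then invokes [6, Theorem 7] directly to conclude that $M_1'\oplus M_2'$ is CS, so the third piece splits off; Lemma 3.1 is used only in the degenerate cases $X\cap M_1=0$ or $X\cap M_2=0$. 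You instead split off only $X\cap M_2$, pass to the quotient $M/(X\cap M_2)\cong M_1\oplus M_2'$, and funnel the entire remaining problem through Lemma 3.1. Your reduction is sound: $X\cap M_2$ is $\mathscr{S}$-closed in $M_2$ because $M_2/(X\cap M_2)$ embeds in $M/X$; the image $\bar X$ satisfies $\bar X\cap \overline{M_2'}=0$ (any $x\in X$ congruent mod $X\cap M_2$ to an element of $M_2$ already lies in $X\cap M_2$); and the lift-back is immediate since $X$ and $(X\cap M_2)\oplus X'$ both contain the kernel of the quotient map and have the same image, hence coincide. Each route buys something. The paper's version avoids quotient bookkeeping and is symmetric in $M_1$ and $M_2$; yours avoids the case analysis and, more interestingly, uses only one direction of the hypothesis — you need $M_2$ to be $M_1$-ojective (so that $M_2'$ is $M_1$-ojective via [6, Proposition 8]) together with $M_1$ and $M_2$ being CLS, but never that $M_1$ is $M_2$-ojective. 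If [6, Proposition 8] indeed gives inheritance of ojectivity by direct summands in the first variable (as the paper's own Lemma 3.4 does for ejectivity), your argument proves the sharper, asymmetric statement parallel to Theorem 3.5, which is a genuine improvement on the theorem as stated. Do verify that citation before claiming the stronger form.
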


\begin{proof}
Let $X$ be an $\mathscr{S}$-closed submodule of $M$. If $X\cap
M_1=0$, then $X$ is a direct summand of $M$ by Lemma 3.1. Let $X\cap
M_1\neq 0$. Then $X\cap M_1$ is a direct summand of $M_1$. Write
$M_1=(X\cap M_1)\oplus M_1', M_1'\leq M_1$. If $X\cap M_2=0$, then
the result follows by Lemma 3.1. Let $X\cap M_2\neq 0$. Then $X\cap
M_2$ is a direct summand of $M_2$. Write $M_2=(X\cap M_2)\oplus
M_2', M_2'\leq M_2$. Then $X=(X\cap M_1)\oplus(X\cap
M_2)\oplus(X\cap(M_1'\oplus M_2'))$. Note that $M_1'$ and $M_2'$ are
CS-module and $M_1'$ and $M_2'$ are relatively ojective, so
$M_1'\oplus M_2'$ are a CS-module by [6, Theorem 7]. Hence
$X\cap(M_1'\oplus M_2')$ is a direct summand of $M_1'\oplus M_2'$.
Therefore, $M$ is a CLS-module, as desired.
\end{proof}

\begin{cor}([8, Theorem 10]) Let $R$ be a ring and $M$ a right
$R$-module such that $M=M_1\oplus M_2\oplus\cdots\oplus M_n$ is a
finite direct sum of relatively injective modules $M_i, 1\leq i\leq
n$. Then $M$ is a CLS-module if and only if $M_i$ is a CLS-module
for each $1\leq i\leq n$.
\end{cor}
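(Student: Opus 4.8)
The plan is to deduce the corollary from \thmref{thm} (Theorem 3.2) by induction on $n$, reducing the multi-summand statement to the two-summand case and using the fact that relatively injective modules are relatively ojective. First I would record the easy direction: if $M$ is a CLS-module, then each $M_i$ is a direct summand of $M$, so each $M_i$ is a CLS-module by \lemref{lem} (Lemma 2.1). The substantive direction is the converse, and here the key observation is that injectivity is stronger than ojectivity: if $A$ is $B$-injective then $A$ is $B$-ojective (this is [6, Proposition 8] or standard), hence a family of relatively injective modules is in particular a family of relatively ojective modules.

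For the converse I would argue by induction on $n$. The base case $n=1$ is trivial, and $n=2$ is exactly \thmref{thm}, once we note that $M_1$ and $M_2$ being relatively injective implies they are relatively ojective. For the inductive step, write $M = N \oplus M_n$ where $N = M_1 \oplus \cdots \oplus M_{n-1}$. I want to apply \thmref{thm} to this decomposition, so I need two things: (a) $N$ and $M_n$ are CLS-modules, and (b) $N$ and $M_n$ are relatively ojective. For (a), $M_n$ is a CLS-module by hypothesis, and $N$ is a CLS-module by the inductive hypothesis applied to the direct sum $M_1 \oplus \cdots \oplus M_{n-1}$ of $n-1$ relatively injective modules. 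For (b), the point is that relative injectivity is inherited by finite direct sums on both sides: $N = \bigoplus_{i<n} M_i$ is $M_n$-injective because each $M_i$ is $M_n$-injective and a finite direct sum of $A$-injective modules is $A$-injective; and $M_n$ is $N$-injective because $M_n$ is $M_i$-injective for each $i<n$ and a module that is $A_i$-injective for each $i$ in a finite family is $\bigl(\bigoplus A_i\bigr)$-injective. Hence $N$ and $M_n$ are relatively injective, so relatively ojective, and \thmref{thm} gives that $M = N \oplus M_n$ is a CLS-module.

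The main obstacle, such as it is, is bookkeeping rather than depth: one must be careful that ``relatively injective'' genuinely passes to finite direct sums in the two senses needed for the inductive step, and that the hypotheses of \thmref{thm} are met at each stage. The only genuinely external input is the implication ``$A$-injective $\Rightarrow$ $A$-ojective,'' which is available from [6]; everything else is a routine induction assembling \thmref{thm} and \lemref{lem}. I would therefore present the proof in two short paragraphs: one disposing of the forward implication via \lemref{lem}, and one carrying out the induction for the converse as above.
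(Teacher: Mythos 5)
Your proof is correct and follows the route the paper intends: the corollary is stated without proof immediately after Theorem 3.2, as an immediate consequence of it once one notes that relative injectivity implies relative ojectivity and passes to finite direct sums. Your induction and the use of Lemma 2.1 for the forward direction supply exactly the bookkeeping the paper leaves implicit.
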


Let $M_1$ and $M_2$ be modules such that $M=M_1\oplus M_2$. Recall
that $M_1$ is $M_2$-ejective [1] if and only if for every submodule
$K$ of $M$ with $K\cap M_1=0$ there exists a submodule $M_3$ of $M$
such that $M=M_1\oplus M_3$ and $K\cap M_3\leq_eK$.

\begin{lem}
Let $A_1$ be a direct summand of $A$ and $B_1$ a direct summand of
$B$. If $A$ is $B$-ejective, then $A_1$ is $B_1$-ejective.
\end{lem}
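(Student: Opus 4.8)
The statement asserts that ejectivity passes to direct summands in the obvious way: if $A$ is $B$-ejective and $A_1\leq^\oplus A$, $B_1\leq^\oplus B$, then $A_1$ is $B_1$-ejective. The natural approach is to unwind the definition directly. Write $A=A_1\oplus A_2$ and $B=B_1\oplus B_2$, and consider $N:=A_1\oplus B_1$ together with a submodule $K\leq N$ with $K\cap A_1=0$; the goal is to produce a summand $M_3$ of $N$ with $N=A_1\oplus M_3$ and $K\cap M_3\leq_e K$. The idea is to enlarge the ambient module: view $K$ inside $A\oplus B=A\oplus B_1\oplus B_2$. Since $K\cap A_1=0$ but we need $K\cap A=0$ to apply the hypothesis, the first substantive step is to replace $K$ by a larger submodule that meets $A$ trivially — or, more cleanly, to work with $K':=K$ directly but intersect the resulting decomposition back down.

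I would proceed as follows. Note $K\cap A_1=0$, and $K\leq A_1\oplus B_1\leq A\oplus B$. Consider $K$ as a submodule of $A\oplus B$; we have $K\cap A = K\cap (A_1\oplus A_2)$, and since $K\leq A_1\oplus B_1$ and $(A_1\oplus B_1)\cap(A_1\oplus A_2)=A_1$ (using $B_1\cap A_2=0$ inside $A\oplus B$ — valid since the sum $A\oplus B$ is direct and $B_1\leq B$, $A_2\leq A$), we get $K\cap A = K\cap A_1 = 0$. So the hypothesis applies: there is $M_3'\leq A\oplus B$ with $A\oplus B = A\oplus M_3'$ and $K\cap M_3'\leq_e K$. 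Now I would push $M_3'$ down to $N$. Since $A\oplus B=A\oplus M_3'$ and $N=A_1\oplus B_1\supseteq A_1$, modularity gives $N = A_1\oplus(N\cap(A_2\oplus M_3'))$... here one must be careful: the complement of $A_1$ inside $N$ should be extracted from $M_3'$. Set $M_3 := N\cap (A_2\oplus M_3')$. Then a routine modular-law computation gives $N = A_1\oplus M_3$, and $K\cap M_3 \supseteq K\cap M_3'$ (since $K\leq N$ and $M_3'\leq A_2\oplus M_3'$, so $K\cap M_3' = K\cap N\cap(A_2\oplus M_3')\supseteq$ — actually $K\cap M_3' \subseteq K\cap M_3$ because $M_3'\subseteq A_2\oplus M_3'$ and $K\subseteq N$), so $K\cap M_3$ is essential in $K$ because it contains the essential submodule $K\cap M_3'$.

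The step I expect to be the main obstacle is verifying that $M_3:=N\cap(A_2\oplus M_3')$ is genuinely a direct complement of $A_1$ in $N$ and is itself a direct summand of $N$ — one needs $A_1\cap M_3=0$ (clear, since $A_1\cap(A_2\oplus M_3')\subseteq A\cap(A_2\oplus M_3')=A_2$, as $A\oplus B=A\oplus M_3'$ forces $A\cap M_3'=0$ and hence $A\cap(A_2\oplus M_3')=A_2$, giving $A_1\cap M_3\subseteq A_1\cap A_2=0$) and $A_1+M_3=N$ (this is the modular-law step: $N = N\cap(A\oplus B)=N\cap(A_1\oplus A_2\oplus M_3')$, and since $A_1\leq N$, $N = A_1\oplus (N\cap(A_2\oplus M_3'))=A_1\oplus M_3$). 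Once this decomposition is in hand, the essentiality of $K\cap M_3$ in $K$ and the fact that $M_3$ is a summand of $N$ follow immediately, and the definition of $B_1$-ejectivity for $A_1$ is met. I would also double-check the one place where the directness of the external sums is used — namely that $A_i\cap B_j=0$ inside $M=A\oplus B$ — since that is what lets all the intersections collapse correctly.
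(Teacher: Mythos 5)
Your proof is correct and uses the same essential technique as the paper: pass to the full module $M=A\oplus B$, apply the $B$-ejectivity of $A$ there to get $M=A\oplus M_3'$ with $K\cap M_3'\leq_e K$, and then intersect the complement back down to the smaller ambient module via the modular law. The only difference is organizational --- the paper carries this out in two stages (first showing $A_1$ is $B$-ejective in $A_1\oplus B$, then that $A$ is $B_1$-ejective in $A\oplus B_1$), whereas you do it in a single step by taking $M_3=N\cap(A_2\oplus M_3')$ inside $N=A_1\oplus B_1$; both versions are valid and rest on the same modular-law computation.
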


\begin{proof}
Write $M=A\oplus B,\  A=A_1\oplus A_2$ and $B=B_1\oplus B_2$. First
we prove that $A_1$ is $B$-ejective. Write $N=A_1\oplus B$. Let $X$
be a submodule of $N$ with $X\cap A_1=0$. Then $X\cap A=0$. Since
$A$ is $B$-ejective, there is a submodule $C$ of $M$ such that
$M=A\oplus C$ and $X\cap C\leq_eX$. Hence $N=A_1\oplus
(N\cap(A_2\oplus C))$. Clearly, $X\cap(N\cap(A_2\oplus C))\leq_eX$.
Therefore, $A_1$ is $B$-ejective. Next we prove that $A$ is
$B_1$-ejective. Write $L=A\oplus B_1$. Let $Y$ be a submodule of $L$
with $Y\cap A=0$. Since $A$ is $B$-ejective, there exists a
submodule $D$ of $M$ such that $M=A\oplus D$ and $D\cap Y\leq_eY$.
Then $L=A\oplus(L\cap D)$. Clearly, $Y\cap(L\cap D)\leq_eY$.
Therefore, $A$ is $B_1$-ejective. Thus $A_1$ is $B_1$-ejective.
\end{proof}

\begin{thm}
Let $M=M_1\oplus M_2$, where $M_1$ and $M_2$ are CLS-modules. If
$M_1$ is $M_2$-ejective, then $M$ is a CLS-module.
\end{thm}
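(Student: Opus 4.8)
The plan is to follow the scheme of the proof of Theorem~3.2, but with relative ojectivity replaced by the one-sided hypothesis that $M_1$ is $M_2$-ejective, using Lemma~3.4 to make the recursion go through. Let $X$ be an $\mathscr{S}$-closed submodule of $M$, so that $M/X$ is nonsingular; since submodules of nonsingular modules are nonsingular, any module embedding into $M/X$ is nonsingular, and I would use this repeatedly to check that the various intersections appearing below are $\mathscr{S}$-closed in the relevant submodules. I would split into the cases $X\cap M_1=0$ and $X\cap M_1\neq 0$.

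For the case $X\cap M_1=0$, I would invoke $M_2$-ejectivity of $M_1$ to obtain a submodule $M_3\leq M$ with $M=M_1\oplus M_3$ and $X\cap M_3\leq_e X$. Since $M=M_1\oplus M_3$ we have $M_3\cong M/M_1\cong M_2$, so $M_3$ is a CLS-module; and $M_3/(X\cap M_3)$ embeds into $M/X$, so $X\cap M_3$ is $\mathscr{S}$-closed in $M_3$, hence a direct summand of $M_3$ and therefore of $M$. A direct summand of $M$ is closed in $M$, and $X$ is closed in $M$ (being $\mathscr{S}$-closed), so $X\cap M_3\leq_e X$ forces $X=X\cap M_3$; thus $X$ is a direct summand of $M$. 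For the case $X\cap M_1\neq 0$, I would note that $M_1/(X\cap M_1)$ embeds into $M/X$, so $X\cap M_1$ is $\mathscr{S}$-closed in $M_1$; as $M_1$ is a CLS-module, write $M_1=D\oplus M_1'$ with $D=X\cap M_1$, and set $N=M_1'\oplus M_2$, so $M=D\oplus N$. By Lemma~2.1, $M_1'$ is a CLS-module, and by Lemma~3.4 (with $A=M_1$, $A_1=M_1'$, $B=B_1=M_2$), $M_1'$ is $M_2$-ejective inside $N$. Since $D\leq X$, the modular law gives $X=D\oplus(X\cap N)$, and $X\cap M_1'\subseteq D\cap M_1'=0$, so $(X\cap N)\cap M_1'=0$; moreover $N+X\supseteq D+N=M$, so $N/(X\cap N)\cong M/X$ is nonsingular and $X\cap N$ is $\mathscr{S}$-closed in $N$. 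Now I would apply the first case to the decomposition $N=M_1'\oplus M_2$ and the submodule $X\cap N$ (which meets $M_1'$ trivially), concluding $N=(X\cap N)\oplus P$ for some $P\leq N$. Then $M=D\oplus N=D\oplus(X\cap N)\oplus P=X\oplus P$, so $X$ is a direct summand of $M$, and $M$ is a CLS-module.

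I expect the nonsingularity bookkeeping to be entirely routine. The one delicate point is the reduction in the second case: after splitting off $D=X\cap M_1$ one must re-apply the first case to $N=M_1'\oplus M_2$, and this requires both that $M_1'$ be a CLS-module and that $M_1'$ be $M_2$-ejective \emph{inside $N$} — exactly what Lemma~2.1 and Lemma~3.4 supply. Lining up these hypotheses correctly, rather than any hard computation, is the main obstacle.
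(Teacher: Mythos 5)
Your proof is correct, and your first case is essentially the paper's argument (you derive $X=X\cap M_3$ from the fact that $X\cap M_3$ is a direct summand, hence closed, whereas the paper notes that $X/(X\cap M_3)$ is both singular and nonsingular; these are interchangeable). The second case, however, takes a genuinely different route. The paper splits off \emph{both} intersections, writing $M_1=(N\cap M_1)\oplus M_1'$ and $M_2=(N\cap M_2)\oplus M_2'$, decomposes $N$ as $(N\cap M_1)\oplus(N\cap M_2)\oplus(N\cap(M_1'\oplus M_2'))$ via the modular law, observes that $M_1'$ and $M_2'$ are nonsingular CLS-modules, hence $\mathscr{G}$-extending by Proposition 2.7, and then invokes the external result [1, Theorem 3.1] (together with Lemma 3.4) to conclude that $M_1'\oplus M_2'$ is $\mathscr{G}$-extending and that $N\cap(M_1'\oplus M_2')$ splits off. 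You instead split off only $D=X\cap M_1$, use the modular law once to get $X=D\oplus(X\cap N)$ with $N=M_1'\oplus M_2$, and feed $X\cap N$ back into the first case, with Lemma 3.4 guaranteeing that $M_1'$ is still $M_2$-ejective. Your reduction is more self-contained: it avoids the detour through $\mathscr{G}$-extending modules and the citation of [1, Theorem 3.1], and it sidesteps the three-term decomposition of $N$ entirely, at the cost of nothing beyond one extra application of the modular law and the routine check that $X\cap N$ is $\mathscr{S}$-closed in $N$ (which you carry out correctly via $N+X=M$). Both arguments are sound; yours is arguably the cleaner of the two.
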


\begin{proof}
Let $N$ be an $\mathscr{S}$-closed submodule of $M$. If $N\cap
M_1=0$, then $M_1$ is nonsingular. Since $M_1$ is $M_2$-ejective,
there is a submodule $M_{3}$ of $M$ such that $M=M_1\oplus M_3$ and
$N\cap M_3\leq_eN$. Note that $N/(N\cap M_3)$ is both singular and
nonsingular. Hence $N=N\cap M_3$. Since $M_3\cong M_2$, $M_3$ is a
CLS-module. Clearly, $M_3/N$ is nonsingular. Then $N$ is a direct
summand of $M$. Let $N\cap M_1\neq 0$. Then $N\cap M_1$ is a direct
summand of $M_1$. Write $M_1=(N\cap M_1)\oplus M_1', M_1'\leq M_1$.
Similarly, $M_2=(N\cap M_2)\oplus M_2', M_2'\leq M_2$. Then
$N=(N\cap M_1)\oplus(N\cap M_2)\oplus(N\cap(M_1'\oplus M_2'))$.
Since $M_1$ is $M_2$-ejective, $M_1'$ is $M_2'$-ejective by Lemma
3.4. Note that $M_1'$ and $M_2'$ are $\mathscr{G}$-extending module.
By [1, Theorem 3.1], $M_1'\oplus M_2'$ is $\mathscr{G}$-extending.
Hence $N\cap(M_1'\oplus M_2')$ is a direct summand of $M_1'\oplus
M_2'$. Therefore, $M$ is a CLS-module, as desired.
\end{proof}

\begin{cor}([8, Theorem 9])
Let $M=M_1\oplus M_2$, where $M_1$ and $M_2$ are CLS-modules. If
$M_1$ is $M_2$-injective, then $M$ is a CLS-module.
\end{cor}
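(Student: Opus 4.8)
The plan is to reduce Corollary 3.6 to Theorem 3.5. Since Theorem 3.5 already handles the $M_2$-ejective case, it suffices to observe the (standard) fact that if $M_1$ is $M_2$-injective then $M_1$ is $M_2$-ejective; granting this, Theorem 3.5 applies verbatim and $M$ is a CLS-module. So the only real content is to verify this implication from the definition of ejectivity recalled just before Lemma 3.4.

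To do so I would argue as follows. Let $K$ be a submodule of $M=M_1\oplus M_2$ with $K\cap M_1=0$, and let $\pi_1\colon M\to M_1$ and $\pi_2\colon M\to M_2$ be the canonical projections. Because $K\cap M_1=0$, the restriction $\pi_2|_K$ is injective, so $L:=\pi_2(K)$ is a submodule of $M_2$ and the rule $f(\pi_2(x))=\pi_1(x)$ for $x\in K$ defines an $R$-homomorphism $f\colon L\to M_1$ (this is exactly the point where $K\cap M_1=0$ is used, to guarantee well-definedness). By $M_2$-injectivity of $M_1$, $f$ extends to a homomorphism $g\colon M_2\to M_1$. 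Put $M_3=\{\,g(m)+m : m\in M_2\,\}$, the graph of $g$.

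It then remains to check two routine points. First, $M=M_1\oplus M_3$: if $g(m)+m\in M_1$ then $m\in M_1\cap M_2=0$, so $M_1\cap M_3=0$, and for any $m_1+m_2\in M$ with $m_i\in M_i$ we have $m_1+m_2=(m_1-g(m_2))+(g(m_2)+m_2)\in M_1+M_3$. Second, $K\leq M_3$: for $x\in K$ one has $x=\pi_1(x)+\pi_2(x)=f(\pi_2(x))+\pi_2(x)=g(\pi_2(x))+\pi_2(x)\in M_3$, and hence trivially $K\cap M_3=K\leq_e K$. Thus $M_1$ is $M_2$-ejective, and Theorem 3.5 finishes the proof. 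I do not anticipate any genuine obstacle: the well-definedness of $f$ and the direct-sum verification for the graph of $g$ are the only things that need care, and the rest is a direct invocation of Theorem 3.5.
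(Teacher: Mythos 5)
Your proposal is correct and matches the paper's intent exactly: the paper states this corollary immediately after Theorem 3.5 with no written proof, the implicit argument being precisely the standard fact that $M_2$-injectivity of $M_1$ implies $M_2$-ejectivity (via the graph of an extension of the induced map $\pi_2(K)\to M_1$), after which Theorem 3.5 applies. Your verification of well-definedness and of $M=M_1\oplus M_3$ with $K\leq M_3$ is sound.
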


\begin{cor}
Let $M=M_1\oplus M_2\oplus\cdots\oplus M_n$ be a finite direct sum.
If $M_i$ is $M_j$-ejective for all $j>i$ and each $M_i$ is a
CLS-module, then $M$ is a CLS-module.
\end{cor}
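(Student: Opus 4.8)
The plan is to prove this corollary by induction on $n$, using Theorem 3.5 as the main engine. The base case $n=1$ is trivial, and the case $n=2$ is exactly Theorem 3.5. So assume $n>2$ and that the statement holds for all shorter direct sums. Write $M = M_1 \oplus N$ where $N = M_2 \oplus \cdots \oplus M_n$. If I can show that $N$ is a CLS-module and that $M_1$ is $N$-ejective, then Theorem 3.5 applies immediately to give that $M$ is a CLS-module.

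First I would handle $N$. The modules $M_2, \ldots, M_n$ are CLS-modules and, by hypothesis, $M_i$ is $M_j$-ejective for all $j > i$ with $2 \le i < j \le n$; this is precisely the hypothesis of the corollary for the direct sum $N$ of $n-1$ terms, so the induction hypothesis yields that $N$ is a CLS-module. Next I would show $M_1$ is $N$-ejective directly from the definition of ejectivity: given a submodule $K$ of $M = M_1 \oplus N$ with $K \cap M_1 = 0$, I need a submodule $M'$ with $M = M_1 \oplus M'$ and $K \cap M' \leq_e K$. The idea is to peel off the summands one at a time. Using that $M_1$ is $M_2$-ejective (relative to the decomposition $M_1 \oplus M_2$ inside $M$), I can find a suitable complement and then iterate with $M_3, \ldots, M_n$; each step replaces the current complement by a new one that still contains an essential copy of $K$'s intersection. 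Composing these adjustments $n-1$ times produces the desired $M'$ with $M = M_1 \oplus M'$ and $K \cap M' \leq_e K$.

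The main obstacle is the bookkeeping in that last step: ejectivity of $M_1$ with respect to $M_2$ is a statement about the decomposition $M_1 \oplus M_2$, not about the big module $M$, so to splice in the remaining summands $M_3, \ldots, M_n$ I would use a lemma in the spirit of Lemma 3.4 — namely that if $A$ is $B$-ejective then $A$ is $(B \oplus C)$-ejective after suitably enlarging inside $A \oplus B \oplus C$, or more precisely an inductive argument showing that $M_1$ being $M_j$-ejective for each $j$ forces $M_1$ to be $(M_2 \oplus \cdots \oplus M_n)$-ejective. This is a routine but slightly delicate essentiality-chasing argument; once it is in place, the conclusion follows from Theorem 3.5. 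An alternative, cleaner route is to avoid proving $N$ is CLS separately: instead peel off $M_n$ last, writing $M = (M_1 \oplus \cdots \oplus M_{n-1}) \oplus M_n$, observe $M_1 \oplus \cdots \oplus M_{n-1}$ is CLS by induction, note each $M_i$ (and hence any relevant submodule) is $M_n$-ejective, and then apply Theorem 3.5 — but this requires knowing the whole block $M_1 \oplus \cdots \oplus M_{n-1}$ is $M_n$-ejective, which again reduces to the same essentiality bookkeeping. Either way, the heart of the matter is promoting pairwise ejectivity to ejectivity against a direct sum.
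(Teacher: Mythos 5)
Your proof is correct and follows essentially the same route as the paper, whose entire proof consists of the remark that the argument is analogous to that of [1, Corollary 3.2]: that argument is precisely your induction on $n$ applied to the decomposition $M = M_1 \oplus (M_2 \oplus \cdots \oplus M_n)$, with Theorem 3.5 as the engine. The one ingredient you leave as a sketch --- promoting ``$M_1$ is $M_j$-ejective for each $j\geq 2$'' to ``$M_1$ is $(M_2 \oplus \cdots \oplus M_n)$-ejective'' --- is exactly the auxiliary result supplied in [1] for this purpose, so nothing essential is missing.
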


\begin{proof}
By analogy with the proof of [1, Corollary 3.2].
\end{proof}

\begin{cor}
Let $M=M_1\oplus M_2$. Then
\begin{enumerate}[(i)]
\item If $M_1$ is injective, then $M$ is a CLS-module if and only if
$M_2$ is a CLS-module.
\item If $M_1$ is a CLS-module and $M_2$ is semisimple, then $M$ is a
CLS-module.
\end{enumerate}
\end{cor}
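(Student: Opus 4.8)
The plan is to obtain both parts by feeding suitable inputs to results already established, rather than arguing from scratch.

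For part~(i), I would treat the two implications separately. If $M$ is a CLS-module, then $M_2$ is a direct summand of $M$ and hence a CLS-module by Lemma~2.1, with no hypothesis on $M_1$ needed. For the converse, recall that an injective module is a CS-module, hence --- since every $\mathscr{S}$-closed submodule is closed --- a CLS-module; moreover an injective module is $N$-injective for every module $N$, so in particular $M_1$ is $M_2$-injective. With $M_1,M_2$ both CLS-modules and $M_1$ being $M_2$-injective, Corollary~3.6 gives at once that $M=M_1\oplus M_2$ is a CLS-module.

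For part~(ii), the strategy is to reduce to Theorem~3.5 by checking its two hypotheses. First, a semisimple module is trivially a CS-module, hence CLS, so $M_2$ is a CLS-module. Second --- the only point that needs an argument --- I must show $M_1$ is $M_2$-ejective. Given a submodule $K$ of $M=M_1\oplus M_2$ with $K\cap M_1=0$, I would project along $M_1$ via $\pi\colon M\to M_2$; then $\pi|_K$ is injective, so $\pi(K)$ is a submodule of the semisimple module $M_2$ and we may write $M_2=\pi(K)\oplus M_2'$. The claim is that $M_3:=K+M_2'$ works: from $\pi(K)\subseteq K+M_1$ one deduces $M_1+M_3=M$, and $M_1\cap M_3=0$ because $\pi$ annihilates $M_1$ and separates $\pi(K)$ from $M_2'$; finally $K\subseteq M_3$ gives $K\cap M_3=K\leq_eK$. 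Thus $M_1$ is $M_2$-ejective, and since $M_1$ and $M_2$ are CLS-modules, Theorem~3.5 yields that $M$ is a CLS-module.

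I do not anticipate any real obstacle. Part~(i) is bookkeeping over Lemma~2.1 and Corollary~3.6 together with the classical fact that injective modules are CS. In part~(ii) the single substantive ingredient is the $M_2$-ejectivity of $M_1$, whose proof rests on one observation: a submodule $K$ meeting $M_1$ trivially is carried isomorphically into $M_2$ by the projection, so that a complement of $\pi(K)$ inside the semisimple module $M_2$ can be reassembled with $K$ into a complement of $M_1$ in $M$; the only care required is in verifying the two resulting direct-sum decompositions, and these are routine. (Alternatively, avoiding Theorem~3.5, one could take an $\mathscr{S}$-closed submodule $X$ of $M$, peel off $X\cap M_1$ --- which is $\mathscr{S}$-closed in $M_1$, hence a direct summand of the CLS-module $M_1$ --- and apply the same projection argument to the residual submodule, which meets the complementary copy of $M_1$ trivially; but routing everything through Theorem~3.5 is cleaner.)
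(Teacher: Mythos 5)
Your proposal is correct. The paper states this corollary without giving a proof, but your derivation is clearly the intended one: part (i) follows from Lemma~2.1 in one direction and from Corollary~3.6 in the other (injective modules being CS, hence CLS, and relatively injective), while part (ii) reduces to Theorem~3.5 once one checks that \emph{any} module is $M_2$-ejective when $M_2$ is semisimple --- and your verification of that ejectivity (lifting a complement of $\pi(K)$ in $M_2$ to $M_3=K+M_2'$ and checking $M=M_1\oplus M_3$ with $K\subseteq M_3$) is complete and correct.
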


\begin{cor}
A module $M$ is a CLS-module if and only if $M=Z_2(M)\oplus M',
M'\leq M$, where $Z_2(M)$ and $M'$ are CLS-modules.
\end{cor}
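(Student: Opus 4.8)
The plan is to prove the two implications separately. The forward implication is almost immediate from what has already been established, and the reverse implication is where the real content lies.

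Suppose first that $M$ is a CLS-module. Since $M/Z_2(M)$ is nonsingular, $Z_2(M)$ is by definition an $\mathscr{S}$-closed submodule of $M$, hence a direct summand; write $M=Z_2(M)\oplus M'$ with $M'\leq M$. By Lemma 2.1 a direct summand of a CLS-module is again a CLS-module, so both $Z_2(M)$ and $M'$ are CLS-modules, as wanted.

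For the converse, suppose $M=Z_2(M)\oplus M'$ with $Z_2(M)$ and $M'$ CLS-modules. I would deduce that $M$ is a CLS-module by verifying that $M'$ is $Z_2(M)$-ejective and then applying Theorem 3.5 with $M_1=M'$ and $M_2=Z_2(M)$. So let $K\leq M$ with $K\cap M'=0$; I must produce $M_3\leq M$ with $M=M'\oplus M_3$ and $K\cap M_3\leq_e K$. The key point is the claim that $K\leq Z_2(M)$: granting it, the choice $M_3=Z_2(M)$ does the job, since then $M=M'\oplus M_3$ and $K\cap M_3=K$ is trivially essential in $K$. To see the claim, let $\pi\colon M\to Z_2(M)$ be the projection along $M'$; then $\ker\pi\cap K=M'\cap K=0$, so $\pi|_K$ is injective and $K\cong\pi(K)$, a submodule of $Z_2(M)$. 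Because the class of nonsingular modules is closed under injective hulls, $Z_2(N)=N\cap Z_2(M)$ for every submodule $N$ of $M$; applying this to $\pi(K)\leq Z_2(M)$ gives $Z_2(\pi(K))=\pi(K)$, hence $Z_2(K)=K$ via the isomorphism $K\cong\pi(K)$, and then $K=Z_2(K)=K\cap Z_2(M)\leq Z_2(M)$.

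I do not expect a serious obstacle; the only delicate part is the bookkeeping with $Z_2$ in the claim above (closure of the Goldie torsion class under submodules, quotients and isomorphisms). I note that the converse can alternatively be proved directly, using only that $M'$ is a CLS-module and bypassing Theorem 3.5: if $X\leq M$ is $\mathscr{S}$-closed, the image of $Z_2(M)$ in the nonsingular module $M/X$ is a quotient of a $Z_2$-torsion module, hence $Z_2$-torsion, hence $0$, so $Z_2(M)\leq X$; by modularity $X=Z_2(M)\oplus(X\cap M')$, and $M'/(X\cap M')\cong M/X$ is nonsingular, so $X\cap M'$ is $\mathscr{S}$-closed in the CLS-module $M'$ and therefore a direct summand of $M'$, which makes $X$ a direct summand of $M$.
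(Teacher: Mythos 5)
Your proposal is correct and follows essentially the same route as the paper: the forward direction uses that $Z_2(M)$ is $\mathscr{S}$-closed (hence a summand) together with Lemma 2.1, and the converse is reduced to Theorem 3.5 with the decomposition $M=Z_2(M)\oplus M'$. The only cosmetic difference is that the paper observes that $M'$ is $Z_2(M)$-injective (any map from a submodule of the Goldie-torsion module $Z_2(M)$ to the nonsingular module $M'$ is zero) and invokes Theorem 3.5 via that stronger property, whereas you verify $Z_2(M)$-ejectivity of $M'$ directly; your check, and your sketched alternative direct argument, are both sound.
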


\begin{proof}
Let $M$ be a CLS-module. Then $M=Z_2(M)\oplus M', M'\leq M$. By
Lemma 2.1, $Z_2(M)$ and $M'$ are CLS-modules. Conversely, if
$M=Z_2(M)\oplus M', M'\leq M$, then $M'$ is $Z_2(M)$-injective. Now
the result follows by Theorem 3.5.
\end{proof}

\begin{cor}
Let $M=M_1\oplus M_2$, where $M_1$ and $M_2$ are CS-modules. If $M$
is nonsingular and $M_1$ is $M_2$-ejective, then $M$ is a CS-module.
\end{cor}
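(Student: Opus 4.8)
The plan is to reduce the statement entirely to the CLS-module machinery already developed, using Proposition 2.7 as the bridge between ``CS'' and ``CLS'' in the nonsingular setting. First I would record the elementary observation that a submodule of a nonsingular module is again nonsingular: if $Z(M)=0$ and $N\leq M$, then $Z(N)=N\cap Z(M)=0$. In particular $M_1$ and $M_2$ are nonsingular. Since they are assumed to be CS-modules, Proposition 2.7 (applied to the nonsingular modules $M_1$ and $M_2$) tells us that $M_1$ and $M_2$ are CLS-modules.

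Next I would invoke Theorem 3.5 directly: we have $M=M_1\oplus M_2$ with $M_1$ and $M_2$ CLS-modules and $M_1$ is $M_2$-ejective, so $M$ is a CLS-module. Finally, since $M$ is nonsingular by hypothesis, Proposition 2.7 applies once more, now to $M$ itself, and yields that $M$ is a CS-module. That completes the argument.

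There is essentially no serious obstacle here; the corollary is a formal consequence of Proposition 2.7 together with Theorem 3.5. The only point that needs a word of justification is hereditariness of nonsingularity along submodules (so that the hypothesis ``$M$ nonsingular'' can be pushed down to $M_1$ and $M_2$), and the fact that the ejectivity hypothesis is precisely what is needed to feed Theorem 3.5. If one wished, one could alternatively phrase the proof as: by Theorem 3.5 (or its specialization to the nonsingular case) $M$ is CLS, and a nonsingular CLS-module is CS by Corollary~2.9 or Proposition~2.7; either route is routine.
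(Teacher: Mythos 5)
Your argument is correct and is exactly the paper's proof (which simply cites Proposition 2.7 and Theorem 3.5): push nonsingularity down to $M_1$ and $M_2$, convert CS to CLS via Proposition 2.7, apply Theorem 3.5, and convert back. Nothing to add.
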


\begin{proof}
By Proposition 2.7 and Theorem 3.5.
\end{proof}

\begin{cor}
Let $M=M_1\oplus M_2$ be a direct sum of CS-modules $M_1$ and $M_2$,
where $M_2$ is nonsingular. If $M_1$ is $M_2$-ejective and
$Z_2(M_1)$ is $M_2$-injective, then $M$ is a CS-module.
\end{cor}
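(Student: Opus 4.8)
The plan is to write $M$ as $A\oplus B$ with $A=Z_2(M)$ a CS-module and $B$ a nonsingular CS-module, dispose of $B$ using Corollary 3.10, and then reduce the CS-condition on $M$ to an extension problem for a homomorphism into $A$, which is where the hypothesis that $Z_2(M_1)$ is $M_2$-injective is spent.

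First I would split off the singular torsion of $M_1$. Since $M_1$ is CS and $Z_2(M_1)$ is $\mathscr{S}$-closed (hence closed) in $M_1$, we have $M_1=Z_2(M_1)\oplus M_1'$ with $M_1'\cong M_1/Z_2(M_1)$ nonsingular; by Lemma 2.1 both summands are CS. As $M_2$ is nonsingular, $Z_2(M)=Z_2(M_1)$, so $M=Z_2(M_1)\oplus(M_1'\oplus M_2)$ with $M_1'\oplus M_2$ nonsingular. By Lemma 3.4, $M_1'$ is $M_2$-ejective, so Corollary 3.10 shows $M_1'\oplus M_2$ is CS. Put $A=Z_2(M)$ and $B=M_1'\oplus M_2$: then $M=A\oplus B$, where $A$ is CS and $M_2$-injective (by hypothesis) and $B$ is nonsingular CS; moreover $B$ is $A$-injective, because a nonsingular module is injective relative to every $Z_2$-torsion module.

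Next I would test an arbitrary closed submodule $N$ of $M$. First one reduces to the case $N$ nonsingular: $Z_2(N)$ is closed in $N$, hence in $M$ (closure is transitive), hence in $M_1$, hence a direct summand $M_1=Z_2(N)\oplus M_1^{*}$ of the CS-module $M_1$; since $Z_2(N)\le N$ this forces $N=Z_2(N)\oplus N'$ with $N'=N\cap(M_1^{*}\oplus M_2)$ nonsingular and closed in $M_1^{*}\oplus M_2$, and $M_1^{*}\oplus M_2$ inherits all the hypotheses (Lemma 3.4, and $Z_2(M_1^{*})$ is a direct summand of $Z_2(M_1)$, hence $M_2$-injective). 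So it suffices to summand-split a nonsingular closed submodule $N$ of $W=A\oplus B$ with $N\cap A=0$, where $A=Z_2(W)$ is CS and $B$ is nonsingular CS.

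For this, the projection $W\to B$ along $A$ is injective on $N$, so $N$ is the graph $\{(\varphi(b),b):b\in N_B\}$ of a homomorphism $\varphi\colon N_B\to A$, where $N_B$ is the image of $N$ in $B$. Since $\varphi(N_B)\le A$ is $Z_2$-torsion while $N_B$ is nonsingular, $\ker\varphi\leq_e N_B$ (and $\varphi(N_B)$ is then singular); replacing $B$ by the closure of $N_B$ in $B$---still a nonsingular CS direct summand of $B$, with $A$ still the $Z_2$-torsion submodule of the new ambient module---we may assume $N_B\leq_e B$. Now $N$ is a direct summand of $W$ exactly when $N_B=B$, for then $W=\{(\varphi(b),b):b\in B\}\oplus(A\oplus 0)=N\oplus A$; and if $N_B\subsetneq B$ then any homomorphism $\psi\colon B\to A$ extending $\varphi$ produces a graph $\{(\psi(b),b):b\in B\}$ that is a proper essential extension of $N$ in $W$, contradicting that $N$ is closed. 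Thus the whole corollary hinges on extending $\varphi$ along the essential embedding $N_B\leq_e B=M_1'\oplus M_2$, using that $\varphi(N_B)$ is singular and that $A=Z_2(M_1)$ is $M_2$-injective. I expect this extension step to be the main obstacle: a $Z_2$-torsion module need not be injective relative to a nonsingular module, so the extension of $\varphi$ is not automatic, and it is precisely the injectivity hypothesis on $Z_2(M_1)$ (pushed along the decomposition $B=M_1'\oplus M_2$) that is designed to supply it.
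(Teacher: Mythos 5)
Your reduction steps are correct and follow the intended route: splitting $M_1=Z_2(M_1)\oplus M_1'$, getting $M_1'\oplus M_2$ CS from Lemma 3.4 and Corollary 3.10, peeling $Z_2(N)$ off a closed submodule $N$ via a direct summand of $M_1$, and recasting the remaining nonsingular closed piece as the graph of a homomorphism $\varphi\colon N_B\to A=Z_2(M_1)$ with $\ker\varphi\leq_e N_B$. But you stop exactly at the step that carries all the content and leave it as an acknowledged ``obstacle'': extending $\varphi$ from $N_B$ to $B=M_1'\oplus M_2$. That is a genuine gap. The hypothesis hands you only the $M_2$-direction ($Z_2(M_1)$ is $M_2$-injective), so you still owe the $M_1'$-direction, and nothing in your write-up supplies it.

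The missing ingredient is the following fact, which the paper itself invokes (via [6, Theorem 1]) in the proof of Corollary 3.13: if a CS-module decomposes as $M_1=Z_2(M_1)\oplus M_1'$, then $Z_2(M_1)$ is $M_1'$-injective. Sketch: given $X\leq M_1'$ and $f\colon X\to Z_2(M_1)$, let $K$ be a complement of $Z_2(M_1)$ in $M_1$ containing $\{x-f(x):x\in X\}$; since $M_1$ is CS, $M_1=K\oplus K^{*}$, and $K^{*}\cong M_1/K$ is an essential extension of an isomorphic copy of $Z_2(M_1)$, hence $Z_2$-torsion, hence $K^{*}=Z_2(M_1)$; the projection onto $K^{*}$ along $K$, restricted to $M_1'$, extends $f$. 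Combining this with the hypothesis that $Z_2(M_1)$ is $M_2$-injective, and using that relative injectivity is preserved by finite direct sums in the second argument and by passage to direct summands, $A=Z_2(M_1)$ is $B$-injective for $B=M_1'\oplus M_2$ (and for the closure of $N_B$ you pass to), which extends $\varphi$ and closes your argument. In fact, once $A$ is known to be $B$-injective you can drop the graph computation altogether and quote [4, Theorem 4] (Corollary 3.12 of the paper) for $M=A\oplus B$ with $B$ nonsingular CS; your graph argument is essentially a re-proof of that theorem. The paper's own proof is only the citation ``by analogy with [8, Corollary 11]'', and the analogy consists of exactly the lemma you are missing.
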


\begin{proof}
By analogy with the proof of [8, Corollary 11].
\end{proof}

\begin{cor}([4, Theorem 4])
Let $M=M_1\oplus M_2$ be a direct sum of CS-modules $M_1$ and $M_2$,
where $M_2$ is nonsingular. If $M_1$ is $M_2$-injective, then $M$ is
a CS-module.
\end{cor}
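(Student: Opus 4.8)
The plan is to derive this statement from Corollary~3.11, which already establishes the conclusion for a direct sum $M=M_1\oplus M_2$ of CS-modules with $M_2$ nonsingular, provided one knows in addition that $M_1$ is $M_2$-ejective and that $Z_2(M_1)$ is $M_2$-injective. So it suffices to check that, under the present hypothesis that $M_1$ is $M_2$-injective (together with $M_1$ being a CS-module), both of these extra conditions hold automatically.

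First I would record that $M_1$ is $M_2$-ejective. This is the well-known fact that relative injectivity implies relative ejectivity: writing $p\colon M\to M_1$ and $q\colon M\to M_2$ for the projections and taking any submodule $K\leq M$ with $K\cap M_1=0$, the restriction $q|_K$ is a monomorphism, so $q(k)\mapsto p(k)$ is a well-defined homomorphism $q(K)\to M_1$; extending it to $h\colon M_2\to M_1$ by $M_2$-injectivity, the graph $M_3=\{\,h(x)+x:x\in M_2\,\}$ satisfies $M=M_1\oplus M_3$ and $K\leq M_3$, whence $K\cap M_3=K\leq_e K$.

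Next I would show that $Z_2(M_1)$ is $M_2$-injective. The point is that $M_1/Z_2(M_1)$ is nonsingular, so $Z_2(M_1)$ is an $\mathscr{S}$-closed, hence closed, submodule of $M_1$; since $M_1$ is a CS-module, $Z_2(M_1)$ is therefore a direct summand of $M_1$. As a direct summand of an $M_2$-injective module is again $M_2$-injective (precompose any extension with the projection onto the summand), we conclude that $Z_2(M_1)$ is $M_2$-injective. Now both hypotheses of Corollary~3.11 are in force, and that corollary gives that $M$ is a CS-module, which completes the argument.

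The only step needing any thought---and the one I would single out as the main obstacle---is the middle one: one has to notice that $Z_2(M_1)$ splits off $M_1$. This is exactly where the CS hypothesis gets used on the summand $M_1$ itself (as opposed to on $M$ as a whole, as in Corollary~3.10), and it is what makes the auxiliary hypothesis ``$Z_2(M_1)$ is $M_2$-injective'' of Corollary~3.11 superfluous in the present setting. The remaining steps are a routine assembly of standard facts about relative injectivity and the singular submodule.
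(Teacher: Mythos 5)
Your argument is correct and is exactly the derivation the paper intends: Corollary~3.12 is stated without proof immediately after Corollary~3.11, and your two reductions---relative injectivity implies relative ejectivity via the graph submodule, and $Z_2(M_1)$ is an $\mathscr{S}$-closed hence closed submodule of the CS-module $M_1$, so it splits off and inherits $M_2$-injectivity---are precisely what is needed to specialize Corollary~3.11 to this setting. Nothing to add.
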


\begin{cor}
Let $M=M_1\oplus M_2$ be a direct sum of CS-modules $M_1$ and $M_2$.
If $M_1$ is $M_2$-ejective, $Z_2(M_1)$ is $M_2$-injective and
$Z_2(M_2)$ is $M_1$-injective, then $M$ is a CS-module if and only
if $Z_2(M)$ is a CS-module.
\end{cor}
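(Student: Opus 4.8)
The forward implication is immediate. Since $M/Z_{2}(M)$ is nonsingular, $Z_{2}(M)$ is an $\mathscr{S}$-closed, hence closed, submodule of $M$; so if $M$ is a CS-module then $Z_{2}(M)$ is a direct summand of $M$, and a direct summand of a CS-module is again a CS-module. So I would dispatch this direction in one line.

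For the converse, assume $Z_{2}(M)$ is a CS-module; the task is to show every closed submodule of $M$ is a direct summand. The first move is to pass to the canonical splitting of $M$ into a $Z_{2}$-torsion part and a nonsingular part. As each $M_{i}$ is CS and $M_{i}/Z_{2}(M_{i})$ is nonsingular, $Z_{2}(M_{i})$ is a direct summand of $M_{i}$; write $M_{i}=Z_{2}(M_{i})\oplus N_{i}$ with $N_{i}$ nonsingular. Then $M=Z_{2}(M)\oplus N$, where $Z_{2}(M)=Z_{2}(M_{1})\oplus Z_{2}(M_{2})$ is CS by hypothesis and $N=N_{1}\oplus N_{2}$ is nonsingular. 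By Lemma 3.4, $N_{1}$ is $N_{2}$-ejective; by Proposition 2.7 each $N_{i}$ is a CLS-module, so $N$ is a CLS-module by Theorem 3.5, and being nonsingular it is CS by Proposition 2.7. Thus $M$ is the direct sum of the CS-module $Z_{2}(M)$ and the nonsingular CS-module $N$. Observe that $N$ is automatically injective and ejective relative to $Z_{2}(M)$ — any homomorphism from a submodule of $Z_{2}(M)$ into $N$ is zero, since a $Z_{2}$-torsion submodule of a nonsingular module is zero — and that the hypotheses ``$Z_{2}(M_{1})$ is $M_{2}$-injective'' and ``$Z_{2}(M_{2})$ is $M_{1}$-injective'' restrict to all submodules of $M_{2}$ and $M_{1}$; these are the facts on which the last step rests.

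It remains to prove that $M=Z_{2}(M)\oplus N$ is a CS-module, and here I would run the argument used for Corollary 3.11 (that is, the argument of [8, Corollary 11]). Let $X$ be a closed submodule of $M$. Then $Z_{2}(X)=X\cap Z_{2}(M)$ is closed in $Z_{2}(M)$ (being closed in $X$, which is closed in $M$), so $Z_{2}(M)=Z_{2}(X)\oplus C$ for some $C$, and since $Z_{2}(X)\leq X$ we get $X=Z_{2}(X)\oplus X'$ with $X'=X\cap(C\oplus N)$; one checks routinely that $X'$ is nonsingular and closed in $M$. Projecting $Z_{2}(M)\oplus N$ onto $N$ identifies $X'$ with the graph of a homomorphism $\varphi$ from a submodule $\overline{X'}$ of $N$ into $Z_{2}(M)$, whose kernel is essential in $\overline{X'}$ (because $\overline{X'}/\ker\varphi$ is $Z_{2}$-torsion while $\overline{X'}$ is nonsingular). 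Since $N$ is CS, let $\overline{X'}^{\,c}$ be the closure of $\overline{X'}$ in $N$; if $\varphi$ extends to a homomorphism on $\overline{X'}^{\,c}$, then the graph of that extension is an essential extension of $X'$ in $M$, hence equals $X'$ by closedness, so $\overline{X'}=\overline{X'}^{\,c}$ is a direct summand of $N$; writing $N=\overline{X'}\oplus N^{*}$ one finds $Z_{2}(M)\oplus\overline{X'}=Z_{2}(M)\oplus X'$, whence $M=X'\oplus(Z_{2}(M)\oplus N^{*})$ and then $X=Z_{2}(X)\oplus X'$ is a direct summand of $M$.

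Everything is thereby reduced to one assertion: the graph map $\varphi\colon\overline{X'}\to Z_{2}(M)$, whose image is $Z_{2}$-torsion and whose kernel is essential, extends over the closure $\overline{X'}^{\,c}$ of its nonsingular domain in $N=N_{1}\oplus N_{2}$. I expect this to be the main obstacle. It is genuinely not enough to know that $M$, or $C\oplus N$, is a CLS-module: over $\mathbb{Z}$, the module $\mathbb{Z}_{2}\oplus\mathbb{Z}$ is a CLS-module possessing a nonsingular closed submodule — the graph of the map $2\mathbb{Z}\to\mathbb{Z}_{2}$, $2\mapsto 1$ — that is not a direct summand, and there the corresponding map does not extend from $2\mathbb{Z}$ to $\mathbb{Z}$. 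What forces $\varphi$ to extend here is precisely the relative injectivity of $Z_{2}(M_{1})$ against $M_{2}$ and of $Z_{2}(M_{2})$ against $M_{1}$; the delicate point is that $\overline{X'}$ lies in $N_{1}\oplus N_{2}$ rather than inside a single $M_{i}$, which I would handle by using that $\varphi$'s image is $Z_{2}$-torsion whereas $\overline{X'}^{\,c}/\overline{X'}$ is singular, so that the two components of $\varphi$ factor through singular quotients and the two injectivity hypotheses suffice component by component. It is exactly the symmetric hypothesis ``$Z_{2}(M_{2})$ is $M_{1}$-injective'', absent from Tercan's original claim, that is needed for the $Z_{2}(M_{2})$-component of $\varphi$ to extend; its omission is what makes the unqualified claim of [8] fail, as the example $\mathbb{Z}_{2}\oplus\mathbb{Z}$ shows.
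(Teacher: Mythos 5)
Your reduction agrees with the paper's up to the midpoint: both arguments split $M_i=Z_2(M_i)\oplus N_i$ (your $N_i$ is the paper's $M_i'$), note that $Z_2(M)=Z_2(M_1)\oplus Z_2(M_2)$ is CS by hypothesis, and show that $N=N_1\oplus N_2$ is a nonsingular CS-module via Lemma 3.4, Theorem 3.5 and Proposition 2.7 (the paper packages this as an appeal to Corollary 3.10). The divergence, and the gap, lies in how $M=Z_2(M)\oplus N$ is then shown to be CS. The paper first proves that $Z_2(M)$ is $N$-injective and then quotes the known theorem that a direct sum of a CS-module and a nonsingular CS-module is CS when the former is injective relative to the latter ([6, Theorem 1]; compare Corollary 3.12). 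To get $Z_2(M)$ $N$-injective one needs, besides the stated hypotheses --- which only yield the \emph{crosswise} statements that $Z_2(M_1)$ is $N_2$-injective and $Z_2(M_2)$ is $N_1$-injective --- the \emph{same-index} statements that $Z_2(M_1)$ is $N_1$-injective and $Z_2(M_2)$ is $N_2$-injective. These are not hypotheses; they are extracted from the fact that $M_i=Z_2(M_i)\oplus N_i$ is a CS-module with $Z_2$-torsion first summand and nonsingular second summand, again via [6, Theorem 1].

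You instead try to prove the last step by hand with the graph argument, and you yourself flag the extension of $\varphi\colon\overline{X'}\to Z_2(M)$ over $\overline{X'}^{\,c}$ as ``the main obstacle'' without closing it. The sketch you offer for closing it does not work as stated: extending $\varphi_1\colon\overline{X'}\to Z_2(M_1)$ over $\overline{X'}^{\,c}\leq N_1\oplus N_2$ requires $Z_2(M_1)$ to be $(N_1\oplus N_2)$-injective, hence in particular $N_1$-injective, and that is exactly the same-index injectivity you never establish --- the hypotheses you invoke are purely crosswise. Moreover $\overline{X'}$ need not decompose as $(\overline{X'}\cap N_1)\oplus(\overline{X'}\cap N_2)$, so a ``component by component'' extension does not assemble even where the crosswise hypotheses do apply; the fact that $\ker\varphi$ is essential and the image is $Z_2$-torsion does not substitute for relative injectivity over $N_1$ (your own $\mathbb{Z}_2\oplus\mathbb{Z}$ example illustrates precisely that such graphs can fail to split absent the right injectivity). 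The repair is to insert the missing lemma (the $Z_2$-torsion summand of a CS-module is injective relative to a nonsingular complementary summand), after which $Z_2(M)$ is $N$-injective and you may quote Corollary 3.12 and drop the graph argument entirely --- which is the paper's route.
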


\begin{proof}
Let $Z_2(M)$ be a CS-module. Then $M=Z_2(M_1)\oplus Z_2(M_1)\oplus
M_1'\oplus M_2'$, where $M_1'\leq M_1$ and $M_2'\leq M_2$. By [6,
Theorem 1], $Z_2(M_1)$ is $M_1'$-injective and $Z_2(M_2)$ is
$M_2'$-injective. Then $Z_2(M)$ is $M_1'\oplus M_2'$-injective.
Since $M_1$ is $M_2$-ejective, $M_1'\oplus M_2'$ is a CS-module by
Corollary 3.10. Hence $M$ is a CS-module by [6, Theorem 1].
\end{proof}

\begin{cor}
Let $M=M_1\oplus M_2$ be a direct sum of CS-modules $M_1$ and $M_2$
such that $M_1$ is $M_2$-injective and $Z_2(M_2)$ is
$M_1$-injective. Then $M$ is a CS-module if and only if $Z_2(M)$ is
a CS-module.
\end{cor}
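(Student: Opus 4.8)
The plan is to obtain this as a special case of the preceding corollary (Corollary~3.13). That corollary requires three hypotheses: that $M_1$ is $M_2$-ejective, that $Z_2(M_1)$ is $M_2$-injective, and that $Z_2(M_2)$ is $M_1$-injective. The third is assumed here outright, so it suffices to derive the first two from the single hypothesis that $M_1$ is $M_2$-injective, together with the fact that $M_1$ is a CS-module; once all three hold, the preceding corollary immediately yields that $M$ is a CS-module if and only if $Z_2(M)$ is a CS-module.

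First I would check that $M_2$-injectivity implies $M_2$-ejectivity. Let $\pi_1,\pi_2$ be the projections of $M=M_1\oplus M_2$ onto $M_1$ and $M_2$. Given $K\leq M$ with $K\cap M_1=0$, the restriction $\pi_2|_K$ is a monomorphism, so $(\pi_1|_K)\circ(\pi_2|_K)^{-1}$ is a homomorphism from the submodule $\pi_2(K)\leq M_2$ into $M_1$; since $M_1$ is $M_2$-injective, it extends to a homomorphism $g:M_2\to M_1$. Taking $M_3=\{\,g(m)+m : m\in M_2\,\}$, one checks routinely that $M=M_1\oplus M_3$ and $K\leq M_3$, whence $K\cap M_3 = K\leq_e K$. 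Thus $M_1$ is $M_2$-ejective.

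Next I would verify that $Z_2(M_1)$ is $M_2$-injective. Since $M_1/Z_2(M_1)$ is nonsingular, $Z_2(M_1)$ is $\mathscr{S}$-closed, hence closed, in $M_1$; because $M_1$ is a CS-module, $Z_2(M_1)$ is therefore a direct summand of $M_1$. A direct summand of an $M_2$-injective module is again $M_2$-injective, so $Z_2(M_1)$ is $M_2$-injective, as needed.

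Having established all three hypotheses of the preceding corollary, the conclusion follows at once. I expect the only delicate point to be the relative-injectivity bookkeeping in the middle two paragraphs, namely the passage from $M_2$-injectivity of $M_1$ to $M_2$-ejectivity via the graph submodule $M_3$, and the descent of $M_2$-injectivity to the direct summand $Z_2(M_1)$; everything beyond that is formal. Alternatively, the whole argument may simply be phrased as ``by analogy with the proof of Corollary~3.13,'' specializing ejectivity to injectivity throughout.
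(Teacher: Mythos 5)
Your proof is correct and takes the paper's intended route: the paper states this corollary without any proof immediately after Corollary~3.13, clearly as the specialization you describe, namely that $M_2$-injectivity of $M_1$ gives $M_2$-ejectivity (via the graph construction) and that $Z_2(M_1)$, being an $\mathscr{S}$-closed hence closed submodule of the CS-module $M_1$ and therefore a direct summand, inherits $M_2$-injectivity. Your two middle paragraphs simply supply the bookkeeping the authors left implicit.
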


We close this paper with the following.

A. Tercan [8, Corollary 13] showed that if a module $M=M_{1}\oplus
M_{2}$ where $M_{1}$ and $M_{2}$ are CS-modules such that $M_{1}$ is
$M_{2}$-injective, then $M$ is a CS-module if and only if $Z_{2}(M)$
is a CS-module. The following example shows that this claim is not
true.

\begin{exa} Let $R=\mathbb{Z}$ and $M_{\mathbb{Z}}=\mathbb{Q}\oplus
\mathbb{Z}_{p^{n}}(n\geq 2, p=prime)$. We know that $\mathbb{Q}$ is
$\mathbb{Z}_{p^{n}}$-injective and $\mathbb{Q}$,
$\mathbb{Z}_{p^{n}}$ are uniform modules. Following by [1, Example
3.4], $M$ is not CS. Next we show that $Z_{2}(M)$ is CS. Since
$\mathbb{Q}_{\mathbb{Z}}$ is nonsingular, it is easy to see that
$Z_{2}(M)=Z_{2}(\mathbb{Z}_{p^{n}})$. Since $\mathbb{Z}_{p^{n}}$ is
CS, $Z_{2}(\mathbb{Z}_{p^{n}})$, as a direct summand of
$\mathbb{Z}_{p^{n}}$, is CS.
\end{exa}

\begin{center}

\textbf{REFERENCES}

\end{center}

\vspace{0.5cm}

\footnotesize{

\textbf{1.} E. Akalan, G.F. Birkenmeier and A. Tercan, \emph{Goldie
extending modules}, Comm. Algebra \textbf{37} (2009), 663-683.

\textbf{2.} G.F. Birkenmeier and A. Tercan, \emph{When some
complement of a submodule is a summand}, Comm. Algebra \textbf{35}
(2007), 597-611.

\textbf{3.} K.R. Goodearl, \emph{Ring theory}, Marcel-Dekker, New
York, 1976.

\textbf{4.} A. Harmanci and P.F. Smith, \emph{Finite direct sums of
CS-modules}, Houston J. Math. \textbf{19} (1993), 523-532.

\textbf{5.}~M.A. Kamal and B.J. M\"uller, \emph{Extending modules
over commutative domains}, Osaka J.Math. \textbf{25} (1988) 531-538.

\textbf{6.} S.H. Mohamed and B.J. M\"uller, \emph{Ojective modules},
Comm. Algebra \textbf{30} (2002) 1817-1827.

\textbf{7.} P.F. Smith and A. Tercan, \emph{Continuous and
quasi-continuous modules}, Houston J. Math. \textbf{18} (1992),
339-348.

\textbf{8.} A. Tercan, \emph{On CLS-modules}, Rocky Mountain J.
Math. \textbf{25} (1995), 1557-1564.

}

\end{document}